\theoremstyle{plain}
\newtheorem{theorem}[subsubsection]{Theorem}
\newtheorem{conjecture}[subsubsection]{Conjecture}
\newtheorem{lemma}[subsubsection]{Lemma}
\newtheorem{proposition}[subsubsection]{Proposition}
\newtheorem{corollary}[subsubsection]{Corollary}
\theoremstyle{definition}
\newtheorem{acknowledgement}[subsubsection]{Acknowledgement}
\newtheorem{example}[subsubsection]{Example}
\theoremstyle{remark}
\newtheorem{remark}[subsubsection]{Remark}
\numberwithin{equation}{section}
\newcommand{\OO}{\mathcal{O}}
\newcommand{\HH}{\mathrm{H}}
\newcommand{\Br}{\mathrm{Br}}
\newcommand{\Coh}{\mathrm{Coh}}
\newcommand{\cA}{\mathcal{A}}
\begin{document}
\title{Picard group action on the category of twisted sheaves}
		\date{}
\author{Ting Gong, Yeqin Liu, and Yu Shen}

\address{Department of Mathematics,
University of Washington, C-138 Padelford, Seattle, WA 98195, USA}
\email{tgong2@uw.edu}
  \address{Department of Mathematics, University of Michigan, 530 Church St,
Ann Arbor, MI 48109, USA}
\email{yqnl@umich.edu}
\address{Department of Mathematics, Michigan State University, 619 Red Cedar Road, East Lansing, MI 48824, USA}
  \email{shenyu5@msu.edu}


\begin{abstract}
In this paper, we study the  category of twisted sheaves over a scheme $X$. Let $\mathcal{M}$ be a quasi-coherent sheaf on $X$, and $\alpha$ in $\Br(X)$. We show that the functor
$
- \otimes_{\mathcal{O}_X} \mathcal{M} : \operatorname{QCoh}(X, \alpha) \to \operatorname{QCoh}(X, \alpha)
$
is naturally isomorphic to the identity functor if and only if $\mathcal{M}\cong \OO_{X}$. As a corollary, the action of $\operatorname{Pic}(X)$ on $D^{b}(X, \alpha)$ is faithful for any Noetherian scheme $X$.

    \end{abstract}
  
\maketitle

\section{Introduction}

For a scheme $X$, let $\Br(X)$ denote the group of Brauer equivalence classes of Azumaya algebras over $X$.
Given $\alpha \in \Br(X)$, let $\operatorname{Coh}(X)$ (resp. $\operatorname{QCoh}(X)$) denote the abelian category of coherent (resp. quasi-coherent) sheaves, and let $\operatorname{Coh}(X, \alpha)$ (resp. $\operatorname{QCoh}(X, \alpha)$) denote the abelian category of $\alpha$-twisted coherent (resp. quasi-coherent) sheaves. 
The following is our main theorem.

\begin{theorem}\label{main theorem module}
We have the following results:
\begin{enumerate}
\item (Theorem \ref{injective of module}) Let $X$ be any scheme and $\mathcal{M} \in \operatorname{QCoh}(X)$. Then the functor
\[
- \otimes_{\mathcal{O}_{X}} \mathcal{M} : \operatorname{QCoh}(X, \alpha) \to \operatorname{QCoh}(X, \alpha)
\]
is naturally isomorphic to the identity functor $\operatorname{id}$ if and only if $\mathcal{M} \cong \mathcal{O}_{X}$.

\item (Theorem \ref{injective of Coh}) Assume further that $X$ is Noetherian and $\mathcal{M} \in \operatorname{Coh}(X)$. Then the functor
\[
- \otimes_{\mathcal{O}_{X}} \mathcal{M} : \operatorname{Coh}(X, \alpha) \to \operatorname{Coh}(X, \alpha)
\]
is naturally isomorphic to the identity functor $\operatorname{id}$ if and only if $\mathcal{M} \cong \mathcal{O}_{X}$.
\end{enumerate}
\end{theorem}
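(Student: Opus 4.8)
The backward implication is immediate: if $\mathcal{M}\cong\OO_{X}$ then $-\otimes_{\OO_{X}}\mathcal{M}$ is canonically isomorphic to $\id$. All the content is in the forward direction, so the plan is to start from a natural isomorphism $\eta\colon \id \Rightarrow (-\otimes_{\OO_{X}}\mathcal{M})$ on $\operatorname{QCoh}(X,\alpha)$ and recover $\mathcal{M}\cong\OO_{X}$. The essential difficulty, and the reason this is not a one-line argument, is that when $\alpha\neq 0$ the category $\operatorname{QCoh}(X,\alpha)$ has \emph{no unit object}: one cannot simply evaluate $\eta$ at $\OO_{X}$ (which is not $\alpha$-twisted) to read off $\mathcal{M}\cong\OO_{X}\otimes_{\OO_X}\mathcal{M}$. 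Locally this obstruction disappears — over an open $U$ on which $\alpha$ trivializes we have $\operatorname{QCoh}(U,\alpha)\simeq\operatorname{QCoh}(U)$, and evaluating at the image of $\OO_{U}$ shows $\mathcal{M}|_{U}\cong\OO_{U}$, so $\mathcal{M}$ is forced to be invertible. The real task is therefore to glue these local trivializations into a single global isomorphism $\mathcal{M}\cong\OO_{X}$.

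My plan is to replace the missing unit object by a global generator coming from the Azumaya algebra representing $\alpha$. Since $\alpha\in\Br(X)$ is by definition the class of an Azumaya algebra, there is a locally free $\alpha$-twisted sheaf $\mathcal{E}$ of positive rank, and $\mathcal{A}:=\cEnd_{\OO_{X}}(\mathcal{E})$ is an Azumaya algebra representing $\alpha$; the point is that $\mathcal{E}$ is a genuine \emph{global} object of $\operatorname{QCoh}(X,\alpha)$ (conceptually, $\cHom_{\OO_X}(\mathcal{E},-)$ realizes the standard equivalence $\operatorname{QCoh}(X,\alpha)\simeq\operatorname{Mod}(\mathcal{A})$ and carries $-\otimes_{\OO_X}\mathcal{M}$ to $-\otimes_{\OO_X}\mathcal{M}$). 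I would evaluate $\eta$ at $\mathcal{E}$ and apply the exact functor $\cHom_{\OO_{X}}(\mathcal{E},-)$; since $\mathcal{E}$ is locally free of finite rank, $\cHom(\mathcal{E},\mathcal{E}\otimes_{\OO_X}\mathcal{M})\cong\cEnd(\mathcal{E})\otimes_{\OO_{X}}\mathcal{M}=\mathcal{A}\otimes_{\OO_{X}}\mathcal{M}$, so $\eta_{\mathcal{E}}$ produces an isomorphism of $\OO_X$-modules
\[
\psi\colon \mathcal{A}\ \xrightarrow{\ \sim\ }\ \mathcal{A}\otimes_{\OO_{X}}\mathcal{M}.
\]
The crucial step is that naturality of $\eta$ forces $\psi$ to be a homomorphism of $(\mathcal{A},\mathcal{A})$-bimodules: right $\mathcal{A}$-linearity is automatic from functoriality of $\cHom(\mathcal{E},-)$ (pre-composition), while the naturality square $\eta_{\mathcal{E}}\circ b=(b\otimes\id_{\mathcal{M}})\circ\eta_{\mathcal{E}}$ for a section $b$ of $\mathcal{A}=\cEnd(\mathcal{E})$ gives left $\mathcal{A}$-linearity.

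To finish, I would invoke the Morita theory of the Azumaya algebra $\mathcal{A}$: the functor $P\mapsto\mathcal{A}\otimes_{\OO_{X}}P$ is an equivalence from $\OO_{X}$-modules to $(\mathcal{A},\mathcal{A})$-bimodules (a reformulation of $\mathcal{A}\otimes_{\OO_{X}}\mathcal{A}^{op}\cong\cEnd_{\OO_{X}}(\mathcal{A})$), under which $\mathcal{A}=\mathcal{A}\otimes_{\OO_X}\OO_{X}$ and $\mathcal{A}\otimes_{\OO_X}\mathcal{M}$ correspond to $\OO_{X}$ and $\mathcal{M}$. Hence the bimodule isomorphism $\psi$ descends to an isomorphism $\OO_{X}\cong\mathcal{M}$; concretely it corresponds to a global section of $\mathcal{M}$ that is nowhere vanishing precisely because $\psi$ is invertible, which is exactly the global trivialization that the local analysis could not by itself supply. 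This proves part (i). For part (ii) the same argument applies verbatim: when $X$ is Noetherian both $\mathcal{E}$ and $\mathcal{A}$ are coherent, so $\mathcal{A}$ already lies in $\operatorname{Coh}(X,\alpha)$ and the Morita identification takes place among coherent modules; alternatively one extends the natural isomorphism along $\operatorname{QCoh}(X,\alpha)=\operatorname{Ind}(\operatorname{Coh}(X,\alpha))$ by continuity and reduces to (i).

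I expect the main obstacle to be exactly the local-to-global gap described in the first paragraph — producing one honest nowhere-vanishing section of $\mathcal{M}$ rather than merely local triviality — which is resolved by passing to the global generator $\mathcal{E}$ and its full endomorphism algebra. The secondary technical point to be careful about is verifying that the naturality of $\eta$ yields genuine $\mathcal{A}$-bimodule linearity of $\psi$ \emph{as sheaves} (not merely with respect to global endomorphisms of $\mathcal{E}$); for this I would check the naturality square over an affine cover and use that $\cHom(\mathcal{E},-)$, the isomorphism $\cHom(\mathcal{E},\mathcal{E}\otimes\mathcal{M})\cong\mathcal{A}\otimes\mathcal{M}$, and the bimodule actions are all compatible with restriction.
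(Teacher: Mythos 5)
Your proposal is correct and takes essentially the same approach as the paper: pass to right $\mathcal{A}$-modules via the twisted-sheaf equivalence, evaluate the natural isomorphism at $\mathcal{A}$ (your $\mathcal{H}om(\mathcal{E},\eta_{\mathcal{E}})$), upgrade it to an $\mathcal{A}$-bimodule isomorphism $\mathcal{A}\cong\mathcal{A}\otimes_{\mathcal{O}_X}\mathcal{M}$ by naturality against (pushforwards of) local left-multiplication maps, and deduce $\mathcal{M}\cong\mathcal{O}_X$ from the classification of bimodule maps $\mathcal{A}\to\mathcal{A}\otimes_{\mathcal{O}_X}\mathcal{M}$ --- which you cite as Morita theory for $\mathcal{A}^{e}$ and the paper proves directly via \'etale splitting and faithfully flat descent (Lemmas~\ref{center of module} and~\ref{bimodule homorphism}, Propositions~\ref{Proposition of fix category} and~\ref{trivial line bundle}). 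The one caution concerns part (ii): the ``verbatim'' variant does not work as stated, because the left-linearity step requires naturality at objects such as $i_{*}i^{*}\mathcal{A}$ for open immersions $i$, which are quasi-coherent but generally not coherent; your alternative route --- extending the natural isomorphism along $\operatorname{QCoh}(X,\mathcal{A})=\operatorname{Ind}(\operatorname{Coh}(X,\mathcal{A}))$ and reducing to (i) --- is precisely the paper's proof of Theorem~\ref{injective of Coh}.
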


Note that Theorem \ref{main theorem module} (i) \textbf{hold for any scheme} $X$ and any $\alpha \in \Br(X)$.
We will prove this result using non‑commutative ring theory.

Theorem~\ref{main theorem module} leads to many interesting and important consequences. For example,   let $D^{b}(X,\alpha) := D^{b}(\Coh(X, \alpha))$ denote the bounded derived category of $\operatorname{Coh}(X,\alpha)$.
 Then the Picard group $\operatorname{Pic}(X)$ has a natural action on $D^{b}(X,\alpha)$ via tensoring with a line bundle. As an application of Theorem \ref{main theorem module}, we know that this action is faithful when $X$ is Noetherian.

\begin{corollary}[{Theorem \ref{injective of Coh}}]\label{main theorem derived categiory}

Assume that $X$ is a Noetherian scheme.    Then the map $\Psi$ is injective, where $\Psi$ is defined as follows:
    \[
    \Psi: \operatorname{Pic}(X) \to \operatorname{Aut}(D^{b}(X, \alpha)), \quad \mathcal{L} \mapsto \Psi(\mathcal{L}) : \mathcal{F}^{\bullet} \mapsto \mathcal{F}^{\bullet} \otimes_{\mathcal{O}_{X}} \mathcal{L}.
    \]       
\end{corollary}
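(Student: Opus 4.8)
The plan is to reduce the injectivity of $\Psi$ to the ``only if'' direction of Theorem~\ref{injective of Coh}. Since the tensor product is associative and commutative, one checks that $\Psi(\mathcal{L}) \circ \Psi(\mathcal{L}') \cong \Psi(\mathcal{L} \otimes_{\mathcal{O}_X} \mathcal{L}')$, so $\Psi$ is a group homomorphism from $(\operatorname{Pic}(X), \otimes)$ to $\operatorname{Aut}(D^{b}(X,\alpha))$. It therefore suffices to show that $\ker \Psi$ is trivial. Accordingly, I would assume that $\mathcal{L} \in \operatorname{Pic}(X)$ satisfies $\Psi(\mathcal{L}) \cong \id_{D^{b}(X,\alpha)}$, that is, there is a natural isomorphism $\eta \colon (- \otimes_{\mathcal{O}_X} \mathcal{L}) \xrightarrow{\ \sim\ } \id$ of autoequivalences of $D^{b}(X,\alpha)$, and deduce that $\mathcal{L} \cong \mathcal{O}_X$.

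The key observation is that a line bundle is locally free, hence flat, so for any $\mathcal{F} \in \operatorname{Coh}(X,\alpha)$ placed in degree $0$ the (derived) tensor product $\mathcal{F} \otimes_{\mathcal{O}_X} \mathcal{L}$ has no higher Tor and remains concentrated in degree $0$. In other words $- \otimes_{\mathcal{O}_X} \mathcal{L}$ is $t$-exact for the standard $t$-structure and preserves the heart, which we identify via the fully faithful inclusion $\operatorname{Coh}(X,\alpha) \hookrightarrow D^{b}(X,\alpha)$ with the complexes concentrated in degree $0$. I would then restrict $\eta$ to this heart: for each $\mathcal{F} \in \operatorname{Coh}(X,\alpha)$ both $\mathcal{F} \otimes_{\mathcal{O}_X} \mathcal{L}$ and $\mathcal{F}$ lie in the heart, so full faithfulness of the inclusion identifies $\eta_{\mathcal{F}}$ with a unique isomorphism in $\operatorname{Coh}(X,\alpha)$. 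Naturality of $\eta$ on $D^{b}(X,\alpha)$ then descends to naturality on the heart, yielding a natural isomorphism $(- \otimes_{\mathcal{O}_X} \mathcal{L})|_{\operatorname{Coh}(X,\alpha)} \cong \id$ of endofunctors of $\operatorname{Coh}(X,\alpha)$.

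With this in hand, I would apply Theorem~\ref{injective of Coh} with $\mathcal{M} = \mathcal{L}$: since $X$ is Noetherian and $\mathcal{L}$ is a line bundle, $\mathcal{L} \in \operatorname{Coh}(X)$, and the existence of such a natural isomorphism forces $\mathcal{L} \cong \mathcal{O}_X$. This is precisely the triviality of $\ker \Psi$, and combined with the first paragraph it gives the injectivity of $\Psi$.

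The step I expect to be the main (albeit routine) obstacle is the descent of the natural transformation $\eta$ from the derived category to the abelian heart. One must verify that $t$-exactness keeps $\eta_{\mathcal{F}}$ in degree $0$ for objects of the heart and that the full faithfulness of the heart inclusion identifies $\operatorname{Hom}_{D^{b}(X,\alpha)}(\mathcal{F} \otimes_{\mathcal{O}_X} \mathcal{L}, \mathcal{F})$ with $\operatorname{Hom}_{\operatorname{Coh}(X,\alpha)}(\mathcal{F} \otimes_{\mathcal{O}_X} \mathcal{L}, \mathcal{F})$ compatibly with composition, so that the resulting family of isomorphisms is genuinely natural in $\mathcal{F}$. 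Everything else is a formal consequence of $\Psi$ being a homomorphism together with the coherent case of the main theorem.
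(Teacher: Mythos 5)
Your proposal is correct and follows essentially the same route as the paper: the paper derives injectivity of $\Psi$ as an immediate consequence of Theorem~\ref{injective of Coh}, implicitly using exactly the reduction you spell out, namely that a derived-level natural isomorphism $(-\otimes_{\mathcal{O}_X}\mathcal{L})\cong \operatorname{id}$ restricts (since $\mathcal{L}$ is flat and the heart embeds fully faithfully) to a natural isomorphism on $\operatorname{Coh}(X,\alpha)$, forcing $\mathcal{L}\cong\mathcal{O}_X$. Your write-up simply makes explicit the heart-restriction and group-homomorphism steps that the paper leaves to the reader.
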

\begin{remark}\label{remark fourier}
In \cite[Theorem~1.1]{canonaco2007twisted}, Canonaco and Stellari show that for a smooth projective variety $X$ over a field $k$, if the Fourier--Mukai transform
\[
\Phi_{\mathcal{P}} \colon D^{b}(X, \alpha) \longrightarrow D^{b}(X, \alpha)
\]
is naturally isomorphic to the identity functor, then $\mathcal{P} \cong \Delta_{*}\mathcal{O}_{X}$, where $\Delta \colon X \to X \times X$ is the diagonal embedding. Therefore, Corollary~\ref{main theorem derived categiory} holds when $X$ is smooth and projective over $k$. However, it is not clear whether their method can be generalized to arbitrary schemes.
\end{remark}

Now assume that  $X$ is a smooth projective variety over a field $k$. Let $\mathscr{X}\to X$ be the $\mathbb{G}_{m}$-gerbe associated to the Brauer class $\alpha$. Let $\mathscr{A}ut_{\mathscr{X}}$ be the fibered category which to any $k$-scheme $S$ associates the groupoid of isomorphism $\mathscr{X}\to \mathscr{X}$ inducing the identity on the stabilizer group schemes $\mathbb{G}_{m}$. Then $\mathscr{A}ut_{\mathscr{X}}$ acts on $D^{b}(X,\alpha)$. As a Corollary of Theorem \ref{main theorem module}, we show that this action is faithful, which answers a question of Olsson in \cite[Chapter 6.4]{olsson2025twisted}.
\begin{corollary}[{Corollary \ref{action faithful}}]
   Assume that $k$ is algebraically closed. Let $\sigma\in \mathscr{A}ut_{\mathscr{X}}(k)$ be an automorphism of the stack inducing the identity functor on $D^{b}(X,\alpha)$. Then we have $\sigma=\operatorname{id}$.
\end{corollary}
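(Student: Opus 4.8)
The plan is to reduce the assertion to the faithfulness of the $\Pic(X)$-action recorded in Corollary~\ref{main theorem derived categiory}, after describing $\mathscr{A}ut_{\mathscr{X}}(k)$ as an extension of a group of base automorphisms by $\Pic(X)$. Rigidifying the gerbe $\pi\colon\mathscr{X}\to X$ along its band recovers $X$, so every $\sigma\in\mathscr{A}ut_{\mathscr{X}}(k)$ descends to an automorphism $\bar\sigma\in\operatorname{Aut}_{k}(X)$, and the fact that $\sigma$ carries $\mathscr{X}$ to itself forces $\bar\sigma^{*}\alpha=\alpha$. The assignment $\sigma\mapsto\bar\sigma$ is a homomorphism whose kernel consists of the automorphisms covering $\id_{X}$ and fixing the band; by the standard classification of automorphisms of a $\Gm$-gerbe over a fixed base (see \cite[Chapter 6]{olsson2025twisted}) this kernel is canonically $\Pic(X)$, where $\mathcal{L}\in\Pic(X)$ acts on the category of $\alpha$-twisted sheaves by $-\otimes_{\OO_{X}}\mathcal{L}$. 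I thus obtain a short exact sequence
\[
1\longrightarrow\Pic(X)\longrightarrow\mathscr{A}ut_{\mathscr{X}}(k)\longrightarrow\{f\in\operatorname{Aut}_{k}(X):f^{*}\alpha=\alpha\}\longrightarrow 1
\]
that is compatible with the actions on $D^{b}(X,\alpha)$.

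Granting this, I would first show that $\sigma$ inducing the identity functor forces $\bar\sigma=\id_{X}$. The functor attached to $\sigma$ is, up to the line-bundle twist coming from the kernel, the twisted pullback along $\bar\sigma$; in particular it permutes the closed points of $X$ according to $\bar\sigma$, because the support in $X$ of a twisted sheaf is intrinsic: the underlying topological space of $\mathscr{X}$ coincides with that of $X$, and one simply takes the support of the sheaf as a module. Since $k$ is algebraically closed we have $\Br(k)=0$, so the fibre of $\mathscr{X}$ over each closed point $x$ is the trivial gerbe $B\Gm$ over $k$; the weight-one, one-dimensional representation then provides a twisted skyscraper with support exactly $\{x\}$ for every closed point $x$. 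An identity functor preserves supports, so $\bar\sigma$ fixes every closed point, and as $X$ is a reduced variety over $k=\bar k$ this gives $\bar\sigma=\id_{X}$. Hence $\sigma$ lies in the kernel $\Pic(X)$ and acts as $-\otimes_{\OO_{X}}\mathcal{L}$ for a well-defined $\mathcal{L}$; as this functor is isomorphic to $\id$, Corollary~\ref{main theorem derived categiory} yields $\mathcal{L}\cong\OO_{X}$, whence $\sigma=\id$ in $\mathscr{A}ut_{\mathscr{X}}(k)$.

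I expect the main obstacle to be the middle step: making precise how a gerbe automorphism acts on $D^{b}(X,\alpha)$ and extracting from it the base automorphism $\bar\sigma$, together with the verification that twisted skyscrapers exist at every closed point and have intrinsic support. The outer two steps are comparatively formal — the identification of the kernel with $\Pic(X)$ is the standard gerbe-automorphism computation, and the final implication is a direct appeal to the main theorem — so the genuinely new input beyond Corollary~\ref{main theorem derived categiory} is the support/rigidification argument that annihilates the base component $\bar\sigma$.
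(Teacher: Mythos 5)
Your proposal is correct, but it takes a more self-contained route than the paper. The paper's proof of Corollary~\ref{action faithful} is two lines: it invokes \cite[Proposition~6.5]{olsson2025twisted} as a black box for precisely the reduction you reconstruct — namely, that an element of $\mathscr{A}ut_{\mathscr{X}}(k)$ inducing the identity functor on $D^{b}(X,\alpha)$ must be the twist by a line bundle $\mathcal{L}\in\operatorname{Pic}(X)$ — and then applies Theorem~\ref{injective of Coh} to conclude $\mathcal{L}\cong\mathcal{O}_{X}$. You instead re-prove that reduction: the exact sequence $1\to\operatorname{Pic}(X)\to\mathscr{A}ut_{\mathscr{X}}(k)\to\{f\in\operatorname{Aut}_{k}(X):f^{*}\alpha=\alpha\}$ via rigidification, and the annihilation of the base automorphism $\bar\sigma$ by comparing supports of twisted skyscrapers, whose existence at every closed point uses $\operatorname{Br}(k)=0$. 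This argument is sound in the paper's setting: supports of twisted sheaves are intrinsic closed subsets of $|X|=|\mathscr{X}|$, pullback along $\sigma$ moves them by $\bar\sigma^{-1}$, and an automorphism of a reduced finite-type $k$-scheme ($k=\bar{k}$) fixing all closed points is the identity. Your final step coincides with the paper's, and it is the only place where the paper's new result (rather than prior gerbe theory) is genuinely needed. What your version buys is transparency — it makes explicit where algebraic closedness of $k$ enters and keeps the proof independent of the precise statement of Olsson's proposition; what the paper's version buys is brevity, since the support/rigidification analysis is exactly the content of the cited result. One small imprecision to clean up: you do not need to factor the functor of $\sigma$ as ``line-bundle twist composed with twisted pullback along $\bar\sigma$''; since $\sigma$ itself covers $\bar\sigma$, pullback along $\sigma$ already transforms supports by $\bar\sigma^{-1}$, which is all the skyscraper argument requires.
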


It follows from our results that taking Brauer twists of varieties does not yield new Calabi–Yau categories. More precisely, we show that $D^{b}(X)$ is Calabi–Yau if and only if $D^{b}(X, \alpha)$ is Calabi–Yau.

\begin{theorem}[{Theorem \ref{Calabi-Yau}}]\label{main theorem calabi-yau}
    Assume $\operatorname{dim}X=n$. Then $D^{b}(X)$ is a Calabi-Yau $[n]$-category if and only if $D^{b}(X,\alpha)$ is a Calabi-Yau $[n]$-category.
\end{theorem}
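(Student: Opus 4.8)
The plan is to reduce both the untwisted and the twisted Calabi--Yau conditions to the single condition $\omega_{X}\cong\OO_{X}$, and then to play them off against each other. First I would recall that, since $X$ is smooth projective of dimension $n$, classical Serre duality identifies the Serre functor of $D^{b}(X)$ with $S_{X}=-\otimes_{\OO_{X}}\omega_{X}[n]$, where $\omega_{X}$ is the canonical bundle. By definition $D^{b}(X)$ is a Calabi--Yau $[n]$-category precisely when $S_{X}\cong[n]$, i.e. when $-\otimes_{\OO_{X}}\omega_{X}\cong\operatorname{id}$ as an autoequivalence of $D^{b}(X)$. Applying Corollary \ref{main theorem derived categiory} to the trivial Brauer class (so that $D^{b}(X,0)=D^{b}(X)$), the map $\Psi\colon\Pic(X)\to\operatorname{Aut}(D^{b}(X))$ is injective, and since $\omega_{X}\in\Pic(X)$ this gives $-\otimes_{\OO_{X}}\omega_{X}\cong\operatorname{id}$ if and only if $\omega_{X}\cong\OO_{X}$. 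Hence $D^{b}(X)$ is Calabi--Yau $[n]$ exactly when $\omega_{X}\cong\OO_{X}$.

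The key input on the twisted side is that $D^{b}(X,\alpha)$ carries a Serre functor given by the \emph{same} formula $S_{X,\alpha}=-\otimes_{\OO_{X}}\omega_{X}[n]$. I would establish this through twisted Serre duality. Modeling $\operatorname{Coh}(X,\alpha)$ by coherent modules over an Azumaya algebra $\cA$ representing $\alpha$, the algebra $\cA$ is étale-locally a matrix algebra $\cEnd(\mathcal{E})$, so étale-locally every twisted sheaf is an ordinary sheaf and the local duality theory is unchanged; consequently the dualizing complex is again $\omega_{X}[n]$ and one obtains a functorial isomorphism $\operatorname{Hom}_{D^{b}(X,\alpha)}(\mathcal{F},\mathcal{G})^{*}\cong\operatorname{Hom}_{D^{b}(X,\alpha)}(\mathcal{G},\mathcal{F}\otimes_{\OO_{X}}\omega_{X}[n])$. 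Equivalently, in the language of the $\Gm$-gerbe $\pi\colon\sX\to X$ attached to $\alpha$, the category $D^{b}(X,\alpha)$ is a weight component of $D^{b}(\sX)$, and the Serre functor of the smooth proper Deligne--Mumford stack $\sX$, which involves only the pullback $\pi^{*}\omega_{X}$, preserves the weight decomposition and so restricts to the stated functor on $D^{b}(X,\alpha)$.

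Granting this, $D^{b}(X,\alpha)$ is Calabi--Yau $[n]$ if and only if $-\otimes_{\OO_{X}}\omega_{X}\cong\operatorname{id}$ on $D^{b}(X,\alpha)$, and by Corollary \ref{main theorem derived categiory} for the given $\alpha$ this holds if and only if $\omega_{X}\cong\OO_{X}$. Combining with the untwisted case, both $D^{b}(X)$ and $D^{b}(X,\alpha)$ are Calabi--Yau $[n]$ exactly when $\omega_{X}\cong\OO_{X}$, so the two conditions are equivalent, which is the assertion. I expect the main obstacle to be the twisted Serre duality step of the second paragraph: one must verify carefully that the Brauer twist does not alter the dualizing complex and that the Serre functor is compatible with the Azumaya-module structure (equivalently, with the weight grading on $D^{b}(\sX)$), since the faithfulness furnished by Corollary \ref{main theorem derived categiory} only becomes useful once the Serre functor has been pinned down to be tensoring with $\omega_{X}[n]$.
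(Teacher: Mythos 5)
Your proposal is correct and follows essentially the same route as the paper: both reduce each Calabi--Yau condition to $\omega_{X}\cong\mathcal{O}_{X}$, using twisted Serre duality (Serre functor $-\otimes_{\mathcal{O}_{X}}\omega_{X}[n]$ on $D^{b}(X,\alpha)$) together with the faithfulness of the Picard action (Theorem~\ref{injective of Coh}) to handle the twisted direction, where evaluation at $\mathcal{O}_{X}$ is unavailable. The only difference is that the paper obtains the twisted Serre duality statement (Lemma~\ref{Serre duality}, with dualizing object $\omega_{\mathcal{A}}[n]=\mathcal{A}\otimes_{\mathcal{O}_{X}}\omega_{X}[n]$) by citing Yekutieli's theory of dualizing complexes over noncommutative rings, whereas you sketch it via \'etale-local triviality of the Azumaya algebra, which is the step you rightly flag as the main thing to verify.
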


In \cite[Example~6.6]{olsson2025twisted}, Olsson gives an example showing that Corollary~\ref{main theorem derived categiory} does not hold for Deligne--Mumford stacks. We provide a new example (see Example~\ref{example}) demonstrating that all of the theorems above—Theorems~\ref{main theorem module}, \ref{main theorem derived categiory}, and~\ref{main theorem calabi-yau}—fail for Deligne--Mumford stacks. However, the examples in \cite{olsson2025twisted} and this paper are both certain gerbes over varieties. It would be interesting to know if such examples exist for Deligne--Mumford stacks with generically trivial stabilizers.

\begin{conjecture}[{Conjecture~\ref{conjecture}}]
Let $X$ be a quasi-separated Deligne--Mumford stack with generically trivial stabilizer. 
Let $\mathcal{M}\in \operatorname{QCoh}(X)$ and $\alpha\in \Br(X)$.
Then the functor
\[
-\otimes_{\mathcal{O}_{X}}\mathcal{M} \colon \operatorname{QCoh}(X,\alpha)\longrightarrow \operatorname{QCoh}(X,\alpha)
\]
is naturally isomorphic to the identity functor $\operatorname{id}$ if and only if $\mathcal{M}\cong \mathcal{O}_{X}$.
\end{conjecture}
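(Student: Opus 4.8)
\emph{Strategy.} The ``if'' direction is immediate, so the content is the converse: assuming a natural isomorphism $\eta$ between $-\otimes_{\OO_{X}}\mathcal{M}$ and the identity endofunctor of $\operatorname{QCoh}(X,\alpha)$, I must show $\mathcal{M}\cong\OO_{X}$. The essential difficulty, already present in Theorem~\ref{main theorem module}(i), is that for $\alpha\neq 0$ the category $\operatorname{QCoh}(X,\alpha)$ has no unit object, so one cannot simply evaluate $\eta$ at $\OO_{X}$. The plan is to mimic the non-commutative ring-theoretic proof of Theorem~\ref{injective of module} étale-locally on $X$. A quasi-separated Deligne--Mumford stack admits an étale atlas by charts of the form $[\operatorname{Spec}(B)/G]$ with $G$ finite; over such a chart $\alpha$ is represented by a $G$-equivariant Azumaya algebra $\mathcal{A}$, and the $\alpha$-twisted quasi-coherent sheaves on the chart are equivalent to equivariant $\mathcal{A}$-modules. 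Evaluating $\eta$ at the regular module $\mathcal{A}$ and using naturality with respect to $\operatorname{End}_{\mathcal{A}}(\mathcal{A})=\mathcal{A}^{\mathrm{op}}$ should, as in the scheme case, produce an isomorphism of $\mathcal{A}$-bimodules $\mathcal{A}\otimes_{\OO}\mathcal{M}\cong\mathcal{A}$, and hence, via the Morita equivalence between $\mathcal{A}$-bimodules and $\OO$-modules attached to an Azumaya algebra, a local trivialization $\mathcal{M}\cong\OO$ of the underlying module.

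Next I would exploit the hypothesis that the stabilizer is generically trivial. It guarantees a dense open substack $U\subseteq X$ that is an algebraic space, on which the scheme-theoretic Theorem~\ref{main theorem module}(i) applies: its proof is étale-local and purely ring-theoretic, so it extends to algebraic spaces verbatim. Restricting $\eta$ to $U$ therefore yields $\mathcal{M}|_{U}\cong\OO_{U}$. The remaining task is to propagate this trivialization across the closed ``stacky'' locus $Z=X\setminus U$, where the residual gerbe-like equivariant structure is concentrated.

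The main obstacle is precisely this extension across $Z$, and it is where the generic-triviality hypothesis is indispensable. The Deligne--Mumford counterexamples (Olsson's Example~6.6 and Example~\ref{example}) are gerbes, whose stabilizer is nontrivial everywhere; there the equivariant (weight) structure on $\mathcal{M}$ can make $-\otimes_{\OO_{X}}\mathcal{M}$ isomorphic to the identity while $\mathcal{M}\not\cong\OO_{X}$. With generically trivial stabilizer this pathology should be excluded, because a sheaf that is an honest $\OO$-module on a dense open cannot acquire a nontrivial weight along $Z$ without violating the global naturality of $\eta$. Concretely, I would try to show that the local bimodule isomorphisms of the first step are forced, by naturality of $\eta$ on morphisms linking the generically free points to $Z$, to be compatible with the generic trivialization $\mathcal{M}|_{U}\cong\OO_{U}$, so that they glue to a global isomorphism $\mathcal{M}\cong\OO_{X}$; equivalently, one must rule out nontrivial equivariant structures on $\mathcal{M}$ supported on $Z$. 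Making this compatibility precise---controlling the $G$-equivariant data over each stacky chart using only information at the generically free points---is the crux, and is the reason the statement remains conjectural; a clean formulation may require mild extra hypotheses (for instance $X$ normal, or $\mathcal{M}$ reflexive) to ensure that $\mathcal{M}$ is determined by its restriction to $U$.
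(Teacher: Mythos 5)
This statement is a \emph{conjecture} in the paper: the authors do not prove it, and their only result in its direction is Proposition~\ref{proposition de}, which shows that a natural isomorphism $-\otimes_{\mathcal{O}_{X}}\mathcal{M}\cong\operatorname{id}$ forces $\mathcal{M}|_{U}\cong\mathcal{O}_{U}$ on a dense open substack $U\subseteq X$ that is a scheme. Your proposal reaches essentially this same partial conclusion and then honestly identifies the remaining step---propagating the trivialization across the stacky locus $Z=X\setminus U$---as open. That is exactly the gap that makes the statement conjectural, so you have not proved it; but neither has the paper, and your diagnosis of where the difficulty sits (controlling the equivariant structure of $\mathcal{M}$ along $Z$, which is precisely what goes wrong in the gerbe counterexamples) agrees with the authors' reason for leaving it as a conjecture.

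Two corrections on the parts you do sketch. First, your claim that one can ``restrict $\eta$ to $U$'' is not automatic: a natural isomorphism of endofunctors of $\operatorname{QCoh}(X,\alpha)$ does not literally restrict to one on $\operatorname{QCoh}(U,i^{*}\alpha)$. The paper's Proposition~\ref{proposition de} does this correctly by evaluating $\eta$ at pushforwards $i_{*}\mathcal{F}$ and applying $i^{*}$, using that $i^{*}i_{*}\cong\operatorname{id}$ for an \emph{open immersion}; you should do the same. Second, and more substantively, your opening strategy---running the bimodule argument \'etale-locally on charts $[\operatorname{Spec}(B)/G]$---would fail, for the reason the paper flags in the remarks following Propositions~\ref{Proposition of fix category} and~\ref{Bimodule of sheaf}: for an \'etale morphism $p\colon V\to X$ that is not an open immersion there is no natural isomorphism $p^{*}p_{*}\cong\operatorname{id}$ on quasi-coherent sheaves, so the naturality of $\eta$ (which only sees objects and morphisms of $\operatorname{QCoh}(X,\alpha)$) cannot be transported to the chart to promote a right-module isomorphism $\mathcal{A}\cong\mathcal{A}\otimes_{\mathcal{O}}\mathcal{M}$ to a bimodule isomorphism; equivalently, the local identification $\mathcal{M}\cong\mathcal{H}om_{\mathcal{A}^{e}}(\mathcal{A},\mathcal{A}\otimes_{\mathcal{O}_{X}}\mathcal{M})$ is not known to be $G$-equivariant. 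Thus the only portion of your outline that can be made rigorous is the restriction-to-$U$ step, which coincides with the paper's Proposition~\ref{proposition de}; the rest remains open, as you correctly acknowledge.
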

\begin{acknowledgement}
We are grateful to James Hotchkiss, Alexander Kuznetsov, and Paolo Stellari for carefully reading early drafts and providing many useful comments. We would like to thank Martin Olsson for pointing out Remark~\ref{remark fourier}. We also thank Rajesh Kulkarni and Alexander Perry for many helpful discussions and comments. TG is particularly grateful to his advisor, Max Lieblich, for his support.
\end{acknowledgement}

\section{Proof of main results}

Let $X$ be a scheme and assume $\alpha\in \Br(X)$. Let $\mathcal{A}$ be an Azumaya algebra over $X$ with $[\mathcal{A}]=\alpha \in \Br(X)$. Let $\operatorname{QCoh}(X,\cA)$ be the abelian category of quasi-coherent right $\cA$-modules.
\begin{lemma}[{\cite[Theorem 1.3.7]{caldararu2000derived}}]\label{Cada}
 We have an equivalence of categories
$$i: \operatorname{QCoh}(X,\alpha)\cong \operatorname{QCoh}(X,\cA).$$
Moreover, let $\mathcal{M}\in \operatorname{QCoh}(X)$ be a quasi-coherent sheaf on $X$, we have the following commutative diagram
\begin{center}
\begin{tikzcd}
\operatorname{QCoh}(X,\alpha) \arrow[r, "- \otimes_{\mathcal{O}_{X}} \mathcal{M}"] \arrow[d, "i"] & \operatorname{QCoh}(X,\alpha) \arrow[d, "i"] \\
\operatorname{QCoh}(X,\cA) \arrow[r, "- \otimes_{\mathcal{O}_{X}} \mathcal{M}"]                & \operatorname{QCoh}(X,\cA).               
\end{tikzcd}
\end{center}
\end{lemma}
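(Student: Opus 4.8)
The plan is to make the equivalence $i$ from the first part explicit and then check that tensoring by $\mathcal{M}$ slides through it. Căldăraru's construction realizes the Azumaya algebra as $\cA \cong \cEnd_{\OO_X}(\mathcal{E})$ for a fixed $\alpha$-twisted locally free sheaf $\mathcal{E}$ of finite rank, and the equivalence is
\[
i \colon \operatorname{QCoh}(X,\alpha)\longrightarrow \operatorname{QCoh}(X,\cA),\qquad \mathcal{F}\longmapsto \cHom_{\OO_X}(\mathcal{E},\mathcal{F}),
\]
where the right $\cA$-action is precomposition $\phi\cdot a = \phi\circ a$. So first I would recall this explicit description, which is the content of the cited \cite[Theorem~1.3.7]{caldararu2000derived}, noting that since $\mathcal{E}$ is locally free of finite rank there is a natural identification $\cHom_{\OO_X}(\mathcal{E},\mathcal{F})\cong \mathcal{E}^{\vee}\otimes_{\OO_X}\mathcal{F}$, with $\mathcal{E}^{\vee}$ an $\alpha^{-1}$-twisted bundle, so that the product is an honest (untwisted) $\OO_X$-module carrying the induced right $\cA$-action through its $\mathcal{E}^{\vee}$ factor.

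With this in hand, the commutativity of the diagram reduces to producing a natural isomorphism $i(\mathcal{F}\otimes_{\OO_X}\mathcal{M})\cong i(\mathcal{F})\otimes_{\OO_X}\mathcal{M}$ of right $\cA$-modules, natural in $\mathcal{F}$. I would build it from the associativity and symmetry constraints of $\otimes_{\OO_X}$,
\[
\mathcal{E}^{\vee}\otimes_{\OO_X}(\mathcal{F}\otimes_{\OO_X}\mathcal{M})\;\cong\;(\mathcal{E}^{\vee}\otimes_{\OO_X}\mathcal{F})\otimes_{\OO_X}\mathcal{M},
\]
equivalently the projection-formula isomorphism $\cHom_{\OO_X}(\mathcal{E},\mathcal{F}\otimes_{\OO_X}\mathcal{M})\cong \cHom_{\OO_X}(\mathcal{E},\mathcal{F})\otimes_{\OO_X}\mathcal{M}$, which is valid for arbitrary quasi-coherent $\mathcal{M}$ precisely because $\mathcal{E}$ is locally free of finite rank. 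Naturality in $\mathcal{F}$ is inherited from the naturality of these structural isomorphisms, and no flatness hypothesis on $\mathcal{M}$ is required since everything is performed at the level of quasi-coherent sheaves.

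The step that actually needs verification — and where I expect the only real bookkeeping to lie — is that this structural isomorphism is $\cA$-linear for the right $\cA$-module structures on both sides. On the source, $\cA = \cEnd(\mathcal{E})$ acts through the $\mathcal{E}^{\vee}$ factor; on the target, $(\mathcal{E}^{\vee}\otimes_{\OO_X}\mathcal{F})\otimes_{\OO_X}\mathcal{M}$ carries the action $(g\otimes m)\cdot a = (g\cdot a)\otimes m$ coming from $i(\mathcal{F})$, with $\mathcal{M}$ an inert factor carrying no $\cA$-action. Since $\OO_X$ is central in $\cA$ and $\mathcal{M}$ is merely an $\OO_X$-module, the associativity and symmetry maps only transport the $\mathcal{M}$ factor past $\mathcal{E}^{\vee}\otimes_{\OO_X}\mathcal{F}$ and never interact with the endomorphisms acting on $\mathcal{E}$; hence they intertwine the two actions. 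This establishes the diagram and completes the second assertion, while the equivalence in the first assertion is quoted directly from \cite[Theorem~1.3.7]{caldararu2000derived}.
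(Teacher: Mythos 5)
Your proposal is correct and is essentially the paper's approach: the paper gives no proof of this lemma, quoting it directly from C\u{a}ld\u{a}raru's Theorem~1.3.7, and your argument simply unfolds that citation — the explicit equivalence $i(\mathcal{F}) = \cHom_{\OO_X}(\mathcal{E},\mathcal{F}) \cong \mathcal{E}^{\vee}\otimes_{\OO_X}\mathcal{F}$ with $\cA \cong \cEnd_{\OO_X}(\mathcal{E})$ acting by precomposition. Your verification that the associativity isomorphism $\mathcal{E}^{\vee}\otimes_{\OO_X}(\mathcal{F}\otimes_{\OO_X}\mathcal{M})\cong(\mathcal{E}^{\vee}\otimes_{\OO_X}\mathcal{F})\otimes_{\OO_X}\mathcal{M}$ is right $\cA$-linear (since the action lives entirely on the $\mathcal{E}^{\vee}$ factor) is exactly the bookkeeping the paper leaves implicit for the ``Moreover'' part of the statement.
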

By Lemma~\ref{Cada}, from now on we work in the category $\operatorname{QCoh}(X, \mathcal{A})$. 
To prove Theorem~\ref{main theorem module}, we need some algebraic lemmas. 
In what follows, we assume that $R$ is a commutative ring and that $A$ is an Azumaya algebra over $R$. 
Let $M$ be an $R$-module. Then $A \otimes_{R} M$ naturally inherits an $A$-bimodule structure, induced by the standard $A$-bimodule structure on $A$.

\begin{lemma}\label{center of module}
 Let $M$ be an $R$-module, and define
\[
Z(A \otimes_R M) := \{ b \in A \otimes_R M \mid ab = ba \text{ for all } a \in A \}.
\]
Then $Z(A \otimes_R M) \cong M$, via the inclusion $m \mapsto 1 \otimes m$.
\end{lemma}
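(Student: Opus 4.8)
The plan is to exploit the separability of Azumaya algebras rather than flatness, because the whole difficulty of the statement lies in $M$ being an \emph{arbitrary} $R$-module (not flat, not finitely generated). Write $A^{e} = A \otimes_{R} A^{\mathrm{op}}$ and let $\mu \colon A^{e} \to A$ be the multiplication map. Since $A$ is Azumaya it is separable over $R$, so there is a separability idempotent $e = \sum_{i} x_{i} \otimes y_{i} \in A^{e}$ with $\mu(e) = \sum_{i} x_{i} y_{i} = 1$ and $(a \otimes 1)\,e = (1 \otimes a)\,e$ for every $a \in A$. I will also use the other two features of an Azumaya algebra: its center is $Z(A) = R$, and it is faithfully flat (indeed finite projective and faithful) over $R$. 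Regarding $A \otimes_{R} M$ as an $A$-bimodule via the left factor, so that $A^{e}$ acts by $(x \otimes y) \cdot b = x\, b\, y$, I define the $R$-linear map
\[
\pi \colon A \otimes_{R} M \longrightarrow A \otimes_{R} M, \qquad \pi(b) = e \cdot b = \sum_{i} x_{i}\, b\, y_{i}.
\]
The crucial point is that $e$ is a fixed element of $A \otimes_{R} A$, \emph{independent of $M$}, so $\pi$ is well defined for every $M$ with no finiteness or flatness hypothesis; this is exactly what allows the argument to run for arbitrary $M$.

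Next I would establish two properties of $\pi$. First, $\pi$ takes values in $Z(A \otimes_{R} M)$: in the $A^{e}$-action one has $a\,\pi(b) = \bigl((a\otimes1)e\bigr)\cdot b$ and $\pi(b)\,a = \bigl((1\otimes a)e\bigr)\cdot b$, and these agree by the identity $(a\otimes1)e = (1\otimes a)e$. Second, $\pi$ restricts to the identity on $Z(A \otimes_{R} M)$: since $\mu(1-e) = 0$, the element $1 - e$ lies in $\ker\mu$, which is generated as a left ideal by the elements $a \otimes 1 - 1 \otimes a$; each such generator annihilates any central $b$ by the very definition of centrality, so $(1-e)\cdot b = 0$ and hence $\pi(b) = e \cdot b = b$ for all $b \in Z(A \otimes_{R} M)$.

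Finally I would show $\operatorname{im}\pi \subseteq 1 \otimes M$. For a simple tensor $b = c \otimes m$ one computes $\pi(c\otimes m) = t(c) \otimes m$, where $t(c) := \sum_{i} x_{i} c y_{i} \in A$. Applying the first property in the case $M = R$ shows that $t(c)$ is central in $A$, so $t(c) \in Z(A) = R$ and therefore $\pi(c \otimes m) \in R \otimes_{R} M = 1 \otimes M$; by $R$-linearity $\operatorname{im}\pi \subseteq 1 \otimes M$. Combining this with the second property gives $Z(A \otimes_{R} M) = \pi\bigl(Z(A\otimes_{R} M)\bigr) \subseteq 1 \otimes M$. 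The reverse inclusion $1 \otimes M \subseteq Z(A \otimes_{R} M)$ is immediate, and $m \mapsto 1 \otimes m$ is injective because $A$ is faithfully flat over $R$; together these yield the asserted isomorphism $Z(A \otimes_{R} M) \cong M$.

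The conceptual content behind $\pi$ is the natural identification $Z(A \otimes_{R} M) \cong \Hom_{A^{e}}(A, A \otimes_{R} M)$ together with the fact that separability makes $A$ a finitely generated projective $A^{e}$-module, so that $\Hom_{A^{e}}(A, -)$ commutes with $- \otimes_{R} M$ for \emph{any} $M$ and returns $\Hom_{A^{e}}(A, A) \otimes_{R} M = Z(A)\otimes_{R} M = M$; the idempotent $e$ is simply the explicit projector realizing this. I expect the only genuine obstacle to be the handling of an arbitrary $M$: a naive argument by localization or base change would require $M$ to be flat, which we do not have, and it is precisely the use of the separability idempotent (equivalently, the projectivity of $A$ over $A^{e}$) that circumvents this.
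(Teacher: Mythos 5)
Your proof is correct, and it takes a genuinely different route from the paper's. The paper argues by étale descent: it first embeds $M$ into $Z(A \otimes_R M)$ via the exact sequence $0 \to R \to A \to A/R \to 0$ (whose terms are $R$-projective, so tensoring with an arbitrary $M$ preserves exactness), then passes to a finite étale cover $\operatorname{Spec} S \to \operatorname{Spec} R$ splitting $A$, where the claim reduces to the elementary computation $Z(\operatorname{Mat}_n(M_S)) = M_S$, and finally descends using faithful flatness of $S$ over $R$. You instead work entirely over $R$, using the separability idempotent $e$ to build an explicit projector $\pi = e \cdot (-)$ from $A \otimes_R M$ onto its center; all of your verifications are sound --- centrality of the image follows from $(a \otimes 1)e = (1 \otimes a)e$, the fact that $\pi$ fixes central elements follows from $1 - e \in \ker\mu$ (even more directly: $e \cdot b = \sum_i x_i b y_i = \sum_i x_i y_i b = b$ for central $b$), the inclusion $\operatorname{im}\pi \subseteq 1 \otimes M$ follows from $Z(A) = R$ applied factor by factor, and injectivity of $m \mapsto 1 \otimes m$ does follow from faithful flatness of $A$ over $R$ (the paper gets it instead from the split inclusion $R \to A$). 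What your route buys: it is self-contained, avoids the étale splitting theorem for Azumaya algebras and all descent, works verbatim over any commutative base ring, and makes manifest why no flatness or finiteness hypothesis on $M$ is needed, since the projector $e$ is independent of $M$. What the paper's route buys: it uses only standard Brauer-group technology plus the center-of-matrices computation, with no input from the theory of separable algebras. One caveat on your motivation: the claim that a base-change argument ``would require $M$ to be flat'' is not accurate --- the paper's base change is along the flat map $R \to S$, which is harmless for arbitrary $M$; but this side remark does not affect the validity of your argument. Finally, your closing observation that $\operatorname{Hom}_{A^e}(A,-)$ commutes with $- \otimes_R M$ because $A$ is finitely generated projective over $A^e$ is precisely the mechanism behind the paper's subsequent lemma on bimodule homomorphisms, so the two approaches reconverge at the very next step.
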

\begin{proof}
We may assume $X = \operatorname{Spec}(R)$ is connected, and that $\operatorname{rank}_R(A) = n^2$ for some $n \in \mathbb{N}$. Note that we have the following short exact sequence of $R$-modules, where each term is a projective $R$-module:
\[
0 \to R \to A \to A/R \to 0.
\]
Tensoring with $M$, we obtain the short exact sequence
$$
0 \to M \xrightarrow{} A \otimes_R M \to (A/R) \otimes_R M \to 0.
$$
Therefore, we have $M \subseteq Z(A \otimes_R M)$, which induces the following short exact sequence:
\begin{equation} \label{short exact sequence}
0 \to M \xrightarrow{} Z(A \otimes_R M) \to Z(A \otimes_R M)/M \to 0.
\end{equation}

By \cite[Theorem 3.1.1]{colliot2021brauer}, there exists a finite surjective étale morphism $\operatorname{Spec}S\to \operatorname{Spec} R$ such that $A_{S}:=A \otimes_R S \cong \operatorname{Mat}_n(S)$. 
Let $M_S := M \otimes_R S$. Then we have
\[
Z(A_S \otimes_S M_S) = Z(\operatorname{Mat}_{n}(M_S)) = M_S.
\]
Note that there is a natural inclusion
$
Z(A \otimes_R M)_S \subseteq Z(A_S \otimes_S M_S).
$ By pulling back the short exact sequence~\eqref{short exact sequence} to $S$, we obtain the following exact sequence:
\[
0 \to M_S \xrightarrow{} Z(A \otimes_R M)_S \to \left(Z(A \otimes_R M)/M\right)_S \to 0.
\]
 Therefore, we have
\[
 Z(A_S \otimes_S M_S)=M_{S} \subseteq Z(A \otimes_R M)_S \subseteq Z(A_{S}\otimes_{S}M_{S}),
\]
so all three terms are equal. In particular,
\[
Z(A \otimes_R M)_S = M_S \quad \text{and} \quad \left(Z(A \otimes_R M)/M\right)_S = 0.
\]
Since $S$ is faithfully flat over $R$, it follows that $Z(A \otimes_R M)/M = 0$, and hence
\[
Z(A \otimes_R M) = M.
\]
\end{proof}

Let $A^{\operatorname{op}}$ denote the opposite algebra of $A$, and set $A^{e} := A \otimes_{R} A^{\operatorname{op}}$. Note that
\[
\operatorname{Hom}_{A\text{-bimod}}(A, A \otimes_{R} M) = \operatorname{Hom}_{A^{e}}(A, A \otimes_{R} M),
\]
where both $A$ and $A \otimes_{R} M$ are regarded as left $A^{e}$-modules via the standard structure induced by the $A$-bimodule actions.

\begin{lemma}\label{bimodule homorphism}
There is a natural isomorphism of $R$-modules
\[
\eta \colon M \xrightarrow{\;\sim\;} \operatorname{Hom}_{A^{e}}(A, A \otimes_R M), 
\qquad 
m \longmapsto \bigl(a \longmapsto a \otimes m\bigr).
\]
\end{lemma}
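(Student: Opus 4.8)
The plan is to factor the claimed map $\eta$ through the ``evaluation at $1$'' map and to reduce everything to Lemma~\ref{center of module}. The starting observation is that an $A^{e}$-module homomorphism $f \colon A \to A \otimes_{R} M$ is the same thing as an $A$-bimodule map, and such a map is completely determined by the single element $f(1) \in A \otimes_{R} M$. Indeed, for any $a \in A$ one has $f(a) = f(a \cdot 1) = a \cdot f(1)$ using the left $A$-action, and also $f(a) = f(1 \cdot a) = f(1) \cdot a$ using the right $A$-action.

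First I would introduce the evaluation map
\[
\mathrm{ev} \colon \operatorname{Hom}_{A^{e}}(A, A \otimes_{R} M) \to A \otimes_{R} M, \qquad f \longmapsto f(1),
\]
and check that its image lies in $Z(A \otimes_{R} M)$. This is forced by comparing the two expressions for $f(a)$ above: they give $a \cdot f(1) = f(1) \cdot a$ for all $a \in A$, so $f(1)$ commutes with $A$, i.e. $f(1) \in Z(A \otimes_{R} M)$ in the sense of Lemma~\ref{center of module}. Next I would verify that $\mathrm{ev}$ is an $R$-module isomorphism onto $Z(A \otimes_{R} M)$. Injectivity is immediate, since $f$ is determined by $f(1)$. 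For surjectivity, given $b \in Z(A \otimes_{R} M)$, the rule $a \mapsto a b$ is, by centrality of $b$, simultaneously left- and right-$A$-linear, hence an $A^{e}$-module map sending $1 \mapsto b$. Thus $\mathrm{ev}$ induces an isomorphism $\operatorname{Hom}_{A^{e}}(A, A \otimes_{R} M) \cong Z(A \otimes_{R} M)$.

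Finally, I would invoke Lemma~\ref{center of module}, which identifies $Z(A \otimes_{R} M) \cong M$ via $m \mapsto 1 \otimes m$. Composing the inverse of $\mathrm{ev}$ with this isomorphism sends $m$ to the unique bimodule map whose value at $1$ is $1 \otimes m$, namely $a \mapsto a \cdot (1 \otimes m) = a \otimes m$. This is precisely the map $\eta$, so $\eta$ is an isomorphism of $R$-modules. Naturality in $M$ is clear, as every map in sight is defined purely from the tensor and bimodule structures.

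I expect no serious obstacle here: the genuine content has already been absorbed into Lemma~\ref{center of module}, and the rest is the standard identification $\operatorname{Hom}_{A^{e}}(A, N) \cong Z(N) := \{ n \in N : an = na \text{ for all } a \in A\}$ for an arbitrary $A$-bimodule $N$, applied to $N = A \otimes_{R} M$. The only point deserving minor care is confirming that the left and right $A$-module structures on $A \otimes_{R} M$ packaged into the $A^{e}$-action agree with the conventions under which Lemma~\ref{center of module} computes the center, so that ``commutes with $A$'' means the same thing in both places.
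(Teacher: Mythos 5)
Your proof is correct and follows essentially the same route as the paper's: both arguments rest on the observation that an $A^{e}$-linear map $A \to A \otimes_R M$ is determined by its value at $1$, which must lie in $Z(A \otimes_R M)$, and then invoke Lemma~\ref{center of module} to identify that center with $M$. Your packaging via the explicit evaluation isomorphism $\operatorname{Hom}_{A^{e}}(A,N) \cong Z(N)$ is just a cleaner organization of the paper's injectivity/surjectivity verification, not a different idea.
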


\begin{proof}

Since $1 \otimes m \in Z(A \otimes_R M)$, the map $\eta(m)$ is $A$-bilinear; hence
$\eta(m) \in \operatorname{Hom}_{A^{e}}(A, A \otimes_R M)$. Thus $\eta$ is well-defined.

If $m \neq m'$, then
\[
\eta(m)(1)=1 \otimes m \neq 1 \otimes m'=\eta(m')(1),
\]
so $\eta$ is injective.

For surjectivity, let $\psi \in \operatorname{Hom}_{A^{e}}(A, A \otimes_R M)$ and set $u := \psi(1)$. For any $a \in A$,
\[
\psi(a) = \psi\bigl((a \otimes 1)\cdot 1\bigr) = (a \otimes 1)\psi(1) = a\,u,
\qquad
\psi(a) = \psi\bigl((1 \otimes a)\cdot 1\bigr) = (1 \otimes a)\psi(1) = u\,a,
\]
hence $a u = u a$ for all $a \in A$. By Lemma~\ref{center of module}, we conclude $u \in Z(A \otimes_R M) = M$, and therefore
$\psi = \eta(u)$. This shows $\eta$ is surjective.

This completes the proof.
\end{proof}

\begin{remark}
    By   Lemma \ref{bimodule homorphism}, we know for any $\psi\in\operatorname{Hom}_{A^{e}}(A, A \otimes_R M)$, $\psi=\operatorname{id}_{A}\otimes \phi, $ where $\phi\in \operatorname{Hom}_{R}(R,M)$.
\end{remark}

We are now ready to begin the proof of the main theorem. Let $\mathcal{M}\in \operatorname{QCoh}$(X). We first observe the following proposition.

\begin{proposition}\label{Proposition of fix category}
 The functor
\[
- \otimes_{\mathcal{O}_{X}} \mathcal{M} : \operatorname{QCoh}(X, \cA) \to \operatorname{QCoh}(X, \cA)
\]
is naturally isomorphic to the identity functor $\operatorname{id}$ if and only if we have the $\cA$-bimodule isomorphism $\cA \cong \cA\otimes_{\mathcal{O}_{X}} \mathcal{M}$.
\end{proposition}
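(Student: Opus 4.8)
The plan is to prove the two implications separately; both turn on the free rank-one module $\cA \in \operatorname{QCoh}(X,\cA)$ and on the interplay between its left and right $\cA$-module structures.

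For the direction ($\Leftarrow$), suppose we are handed an $\cA$-bimodule isomorphism $\cA \cong \cA \otimes_{\mathcal{O}_{X}} \mathcal{M}$. Then for every right $\cA$-module $\mathcal{F}$ I would write the chain of isomorphisms of right $\cA$-modules
\[
\mathcal{F} \otimes_{\mathcal{O}_{X}} \mathcal{M} \;\cong\; \mathcal{F} \otimes_{\cA} \bigl(\cA \otimes_{\mathcal{O}_{X}} \mathcal{M}\bigr) \;\cong\; \mathcal{F} \otimes_{\cA} \cA \;\cong\; \mathcal{F}.
\]
Here the first isomorphism is the standard cancellation isomorphism (forming the middle tensor product via the \emph{left} $\cA$-structure on $\cA \otimes_{\mathcal{O}_{X}} \mathcal{M}$, and reading off the target right $\cA$-action from its \emph{right} $\cA$-structure); the second is induced by the assumed bimodule isomorphism; and the third is the canonical isomorphism $\mathcal{F}\otimes_{\cA}\cA\cong\mathcal{F}$. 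Each step is natural in $\mathcal{F}$, so the composite is a natural isomorphism $-\otimes_{\mathcal{O}_{X}}\mathcal{M}\cong\operatorname{id}$. This direction is entirely formal and requires no locality.

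For the harder direction ($\Rightarrow$), let $\tau\colon \operatorname{id}\Rightarrow -\otimes_{\mathcal{O}_{X}}\mathcal{M}$ be the given natural isomorphism. Evaluating at the free module yields a right $\cA$-linear isomorphism $\tau_{\cA}\colon \cA\xrightarrow{\sim}\cA\otimes_{\mathcal{O}_{X}}\mathcal{M}$, and the aim is to promote it to a bimodule isomorphism. Being right $\cA$-linear, $\tau_{\cA}$ is right multiplication by $u:=\tau_{\cA}(1)\in\Gamma(X,\cA\otimes_{\mathcal{O}_{X}}\mathcal{M})$, so the only point to verify is left $\cA$-linearity; a short computation shows this is equivalent to $u$ lying in the relative center $Z(\cA\otimes_{\mathcal{O}_{X}}\mathcal{M})$. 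To access this I feed left-multiplication operators into naturality: each section $a$ defines a right $\cA$-linear endomorphism $L_{a}\colon \cA\to\cA$, $b\mapsto ab$, and since $-\otimes_{\mathcal{O}_{X}}\mathcal{M}$ sends $L_{a}$ to left multiplication by $a$, the naturality square for $L_{a}$ evaluated at $1$ reads $a\cdot u = u\cdot a$. Running this over all $a$ places $u$ in the center, so that $\tau_{\cA}$ becomes an $\cA$-bimodule isomorphism.

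The main obstacle is sheaf-theoretic rather than algebraic: the endomorphisms $L_{a}$ are morphisms of $\operatorname{QCoh}(X,\cA)$ only for \emph{global} sections $a$, so global naturality alone forces $u$ to commute just with $\Gamma(X,\cA)$, which is weaker than centrality. I would resolve this by noting that both "$\tau_{\cA}$ is left $\cA$-linear" and "$u$ is central" are local conditions on $X$, and hence may be checked on an affine cover. Over an affine open $U=\operatorname{Spec} R$ with $A=\Gamma(U,\cA|_{U})$ an Azumaya algebra, every section is global, the right $A$-module endomorphisms of $A$ are exactly the left multiplications, and the naturality argument applies verbatim to show $u|_{U}$ commutes with all of $A$, i.e. $u|_{U}\in Z(A\otimes_{R}M)$, precisely the module computed in Lemma~\ref{center of module}. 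The one item demanding care is the legitimacy of restricting the global $\tau$ to each $\operatorname{QCoh}(U,\cA|_{U})$; this is justified because both functors commute with restriction to opens and $\operatorname{QCoh}(-,\cA)$ satisfies descent, so $\tau$ corresponds to a compatible family of natural isomorphisms over the cover. Gluing the local conclusions gives $u\in Z(\cA\otimes_{\mathcal{O}_{X}}\mathcal{M})$ globally, completing the upgrade of $\tau_{\cA}$ to the desired $\cA$-bimodule isomorphism.
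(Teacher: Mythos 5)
Your proposal follows the paper's proof in all essentials: the ($\Leftarrow$) direction is the same cancellation chain $\mathcal{F}\otimes_{\mathcal{O}_X}\mathcal{M}\cong\mathcal{F}\otimes_{\cA}(\cA\otimes_{\mathcal{O}_X}\mathcal{M})\cong\mathcal{F}\otimes_{\cA}\cA\cong\mathcal{F}$, and in the ($\Rightarrow$) direction you, like the paper, evaluate the natural isomorphism at $\cA$, reduce left $\cA$-linearity of $\tau_{\cA}$ to centrality of $u=\tau_{\cA}(1)$, obtain the commutation relation $au=ua$ by feeding left-multiplication morphisms $L_a$ into naturality, and correctly identify the key obstacle that $L_a$ is a morphism of $\operatorname{QCoh}(X,\cA)$ only for global sections $a$, so that one must localize.

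The one place where your write-up has a genuine gap is the justification of the localization step. You restrict the global natural transformation $\tau$ to $\operatorname{QCoh}(U,\cA|_U)$ and justify this by saying the functors commute with restriction and ``$\operatorname{QCoh}(-,\cA)$ satisfies descent.'' Descent does not deliver this: it presents the global category as a limit of local ones, but $\tau$ only assigns morphisms to objects of $\operatorname{QCoh}(X,\cA)$, and an object of $\operatorname{QCoh}(U,\cA|_U)$ is neither an object of the global category nor functorially extended to one by descent data. What actually makes the localization legitimate---and this is exactly the paper's maneuver---is the adjunction with pushforward along the open immersion $i\colon U\hookrightarrow X$: one pushes the local morphism forward to get a genuine morphism $i_*L_a\colon i_*i^*\cA\to i_*i^*\cA$ in $\operatorname{QCoh}(X,\cA)$, applies naturality of $\tau$ to $i_*L_a$ in the global category, and then applies $i^*$ together with $i^*i_*\cong\operatorname{id}$ to transport the resulting square back to $U$; the same mechanism (setting $\tau^U_{\mathcal{G}}:=i^*\tau_{i_*\mathcal{G}}$) is how one would prove that $\tau$ restricts at all. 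So your outline and conclusion are correct, but the appeal to descent should be replaced by this $i_*/i^*$ argument; as the paper's later remark about Deligne--Mumford stacks makes clear, the availability of $i^*i_*\cong\operatorname{id}$ for open immersions of schemes is precisely the point on which the whole proof turns.
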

\begin{proof}

$\Longleftarrow$: If we have $\cA\cong \cA\otimes_{\mathcal{O}_{X}} \mathcal{M}$ as $\cA$-bimodules, then for any  $\mathcal{F}\in \operatorname{QCoh}(X,\cA)$, we have $$\mathcal{F}\cong \mathcal{F}\otimes_{\cA}\cA\cong \mathcal{F}\otimes_{\cA}(\cA\otimes_{\mathcal{O}_{X}}\mathcal{M})\cong \mathcal{F}\otimes_{\mathcal{O}_{X}} \mathcal{M}.$$

$\Longrightarrow$: Suppose that $-\otimes_{\mathcal{O}_{X}}\mathcal{M} \cong \operatorname{id}$ as functors. 
Then $\mathcal{A} \cong \mathcal{A} \otimes_{\mathcal{O}_{X}} \mathcal{M}$ as right $\mathcal{A}$-modules. 
Let $\psi \colon \mathcal{A} \xrightarrow{\sim} \mathcal{A} \otimes_{\mathcal{O}_{X}} \mathcal{M}$ be this isomorphism of right $\mathcal{A}$-modules. 
We want to show that $\psi$ is an $\mathcal{A}$-bimodule isomorphism. For any open subset $U \subseteq X$, we have the map
\[
\psi|_{U} \colon \mathcal{A}|_{U} \longrightarrow \bigl(\mathcal{A}\otimes_{\mathcal{O}_{X}}\mathcal{M}\bigr)|_{U}.
\]
 Since $\psi$ is right $\mathcal{A}$-linear, we have $\psi|_{U}(a)=\psi|_{U}(1)\,a$ for all $a\in \mathcal{A}(U)$. 
To show that $\psi$ is an $\mathcal{A}$-bimodule morphism, it suffices to prove that
\[
\psi|_{U}(1)\,a \;=\; a\,\psi|_U(1)\qquad\text{for all } a\in \mathcal{A}(U).
\]

Let $i \colon U \hookrightarrow X$ be the open immersion. For any $a \in \mathcal{A}(U)$, the left multiplication by $a$
\[
L_a \colon i^{*}\mathcal{A} = \mathcal{A}|_{U} \longrightarrow \mathcal{A}|_{U}, \qquad a_0 \longmapsto a\,a_0,
\]
is a homomorphism of right $i^{*}\mathcal{A}$-modules. It induces a homomorphism of right $\mathcal{A}$-modules
\[
i_{*}L_{a} \colon i_{*}i^{*}\mathcal{A} \longrightarrow i_{*}i^{*}\mathcal{A}.
\]
Thus we have a commutative diagram:
\[
\begin{tikzcd}
i_{*}i^{*}\mathcal{A} \arrow[r,"i_{*}L_{a}"] \arrow[d,"\psi"'] &
i_{*}i^{*}\mathcal{A} \arrow[d,"\psi"] \\
i_{*}i^{*}\mathcal{A}\otimes_{\mathcal{O}_{X}}\mathcal{M}
\arrow[r,"\,i_{*}L_{a}\otimes \operatorname{id}\,"] &
i_{*}i^{*}\mathcal{A}\otimes_{\mathcal{O}_{X}}\mathcal{M}.
\end{tikzcd}
\]
Applying $i^{*}$ and using $i^{*}i_{*} \cong \operatorname{id}$, we obtain the following commutative diagram:
\[
\begin{tikzcd}
i^{*}\mathcal{A} \arrow[r,"L_{a}"] \arrow[d,"\psi|_{U}"'] &
i^{*}\mathcal{A} \arrow[d,"\psi|_{U}"] \\
i^{*}(\mathcal{A}\otimes_{\mathcal{O}_{X}}\mathcal{M})
\arrow[r,"\,L_{a}\otimes \operatorname{id}\,"] &
i^{*}(\mathcal{A}\otimes_{\mathcal{O}_{X}}\mathcal{M}).
\end{tikzcd}
\]
For any $a\in \cA(U)$, we have 
\[
\psi|_{U} \circ L_a(1) = \psi|_{U}(a) = \psi|_{U}(1) a , \quad \text{and} \quad (L_a \otimes \operatorname{id}) \circ \psi|_{U}(1)  = a \psi|_{U}(1).
\]
So we have $\psi|_{U}(1)a =  a\psi|_{U}(1)$ for all $a \in \cA(U)$, and therefore $\psi$ is an $A$-bimodule isomorphism.  
\end{proof}

\begin{remark}
In Proposition~\ref{Proposition of fix category}, the ``only if'' direction requires that $X$ be a scheme. 
We cannot combine Proposition~\ref{Proposition of fix category} with descent to treat Deligne--Mumford stacks. 
The reason is that for an étale morphism $p \colon U \to X$, there is in general no natural isomorphism 
$p^{*}p_{*} \cong \operatorname{id}$ on quasi-coherent sheaves.
\end{remark}

We want to determine the $\mathcal{A}$-bimodule homomorphisms from $\mathcal{A}$ to $\mathcal{A} \otimes_{\mathcal{O}_X} \mathcal{M}$. This will be accomplished by the following proposition.

\begin{proposition}\label{Bimodule of sheaf}We have the isomorphism
$$ \HH^{0}(X,\mathcal{M})\cong \operatorname{Hom}_{\cA^{e}}(\cA, \cA\otimes_{\mathcal{O}_{X}}\mathcal{M})$$  
\end{proposition}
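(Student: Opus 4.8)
The plan is to promote the affine isomorphism of Lemma~\ref{bimodule homorphism} to a morphism of sheaves and then take global sections. Concretely, I would first consider the sheaf $\cHom_{\cA^{e}}(\cA,\cA\otimes_{\mathcal{O}_{X}}\mathcal{M})$ on $X$, whose value on an open $U\subseteq X$ is the module of $\cA^{e}|_{U}$-linear homomorphisms $\cA|_{U}\to(\cA\otimes_{\mathcal{O}_{X}}\mathcal{M})|_{U}$, and define a map of $\mathcal{O}_{X}$-modules
\[
\eta\colon \mathcal{M}\longrightarrow \cHom_{\cA^{e}}(\cA,\cA\otimes_{\mathcal{O}_{X}}\mathcal{M}),
\qquad m\longmapsto\bigl(a\mapsto a\otimes m\bigr).
\]
For each local section $m$ of $\mathcal{M}$, the assignment $a\mapsto a\otimes m$ is $\cA$-bilinear, hence $\cA^{e}$-linear, exactly as in the proof of Lemma~\ref{bimodule homorphism}; so $\eta$ is well defined, and it is visibly $\mathcal{O}_{X}$-linear and natural.

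Next I would check that $\eta$ is an isomorphism, a property that may be verified locally. Since $\cA$ is an Azumaya algebra it is locally free of finite rank over $\mathcal{O}_{X}$; hence so is $\cA^{e}$, and $\cA$ is a finitely presented $\cA^{e}$-module. Consequently the formation of $\cHom_{\cA^{e}}(\cA,-)$ commutes with restriction to an affine open $U=\Spec R$, and on such $U$ the value of $\cHom_{\cA^{e}}(\cA,\cA\otimes_{\mathcal{O}_{X}}\mathcal{M})$ is $\operatorname{Hom}_{A^{e}}(A,A\otimes_{R}M)$, where $A=\Gamma(U,\cA)$ and $M=\Gamma(U,\mathcal{M})$. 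Under this identification $\eta|_{U}$ is precisely the map $\eta$ of Lemma~\ref{bimodule homorphism}, which is an isomorphism. As $X$ is covered by such affine opens, $\eta$ is an isomorphism of $\mathcal{O}_{X}$-modules.

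Finally I would take global sections. Because the presheaf $U\mapsto\operatorname{Hom}_{\cA^{e}|_{U}}(\cA|_{U},(\cA\otimes_{\mathcal{O}_{X}}\mathcal{M})|_{U})$ is already a sheaf, one has the standard identification
\[
\operatorname{Hom}_{\cA^{e}}(\cA,\cA\otimes_{\mathcal{O}_{X}}\mathcal{M})
=\Gamma\bigl(X,\cHom_{\cA^{e}}(\cA,\cA\otimes_{\mathcal{O}_{X}}\mathcal{M})\bigr).
\]
Applying $\Gamma(X,-)$ to the sheaf isomorphism $\eta$ then gives
\[
\HH^{0}(X,\mathcal{M})=\Gamma(X,\mathcal{M})
\xrightarrow{\ \sim\ }
\Gamma\bigl(X,\cHom_{\cA^{e}}(\cA,\cA\otimes_{\mathcal{O}_{X}}\mathcal{M})\bigr)
=\operatorname{Hom}_{\cA^{e}}(\cA,\cA\otimes_{\mathcal{O}_{X}}\mathcal{M}),
\]
which is the claimed isomorphism.

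The main obstacle is the sheaf-theoretic bookkeeping rather than any new algebraic input: one must verify that $\cHom_{\cA^{e}}(\cA,-)$ genuinely localizes, so that its restriction to an affine open computes the module-level Hom to which Lemma~\ref{bimodule homorphism} applies, and that the naturality of $\eta$ forces the local isomorphisms to agree on overlaps and hence glue. Both points rest on $\cA$ being locally free of finite rank over $\mathcal{O}_{X}$, so this finiteness is the hypothesis I would track most carefully; by contrast, $\mathcal{M}$ is allowed to be an arbitrary quasi-coherent sheaf, since the affine input works for any $R$-module $M$.
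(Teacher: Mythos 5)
Your proof is correct and takes essentially the same route as the paper: reduce to the affine case via Lemma~\ref{bimodule homorphism}, identify $\mathcal{M}$ with the sheaf $\cHom_{\cA^{e}}(\cA,\cA\otimes_{\mathcal{O}_{X}}\mathcal{M})$, and conclude by taking global sections. The only difference is organizational—you define the canonical global map $\eta$ first and verify it is an isomorphism affine-locally (making explicit that $\cHom_{\cA^{e}}(\cA,-)$ localizes because $\cA$ is finitely presented over $\cA^{e}$), whereas the paper constructs the local isomorphisms $\eta_{i}$ and glues them; the underlying argument is identical.
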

\begin{proof}
Let $\{U_i\}$ be an affine open cover of $X$, where each $U_i = \operatorname{Spec} R_i$. Suppose that $\mathcal{M}|_{U_i} \cong \widetilde{M_i}$ for some $R_i$-module $M_i$. By Lemma \ref{bimodule homorphism}, we have isomorphism of $\mathcal{O}_{U_{i}}$-modules: 
$$\eta_{i}:  \mathcal{M}|_{U_{i}}\xrightarrow{\sim} \mathcal{H}om_{\cA^{e}}(\cA, \cA\otimes_{\mathcal{O}_{X}} \mathcal{M})|_{U_{i}}. $$

By Lemma~\ref{bimodule homorphism} again, the maps $\eta_i$ agree on the intersections $U_i \cap U_j$. Therefore, by gluing $\eta_i$'s together, we obtain a global isomorphism of $\mathcal{O}_{X}$-modules:
\[
\eta : \mathcal{M} \xrightarrow{\sim} \mathcal{H}om_{\mathcal{A}^{e}}(\mathcal{A}, \mathcal{A} \otimes_{\mathcal{O}_{X}} \mathcal{M}).
\]
Thus we have $$\HH^{0}(X,\mathcal{M}) \cong \HH^{0}(X,\mathcal{H}om_{\mathcal{A}^{e}}(\mathcal{A}, \mathcal{A} \otimes_{\mathcal{O}_{X}} \mathcal{M}))= \operatorname{Hom}_{\cA^{e}}(\cA, \cA\otimes_{\mathcal{O}_{X}}\mathcal{M}).$$
By Lemma~\ref{bimodule homorphism}, the  isomorphism  is given by
\[
\eta_X \colon \HH^{0}(X,\mathcal{M}) \xrightarrow{\;\sim\;} 
\operatorname{Hom}_{\mathcal{A}^{e}}\!\bigl(\mathcal{A},\, \mathcal{A}\otimes_{\mathcal{O}_{X}}\mathcal{M}\bigr),
\qquad
m \longmapsto \bigl(a \longmapsto a \otimes m\bigr).
\]
  
\end{proof}
\begin{remark}\label{induced by line bundle}
By  Proposition \ref{Bimodule of sheaf}, we know for any $\psi\in\operatorname{Hom}_{\cA^{e}}(\cA, \cA \otimes_{\mathcal{O}_{X}} \mathcal{M})$, $\psi=\operatorname{id}_{\cA}\otimes \phi, $ where $\phi\in \operatorname{Hom}_{\mathcal{O}_{X}}(\mathcal{O}_{X},\mathcal{M})$.   
\end{remark}
\begin{remark}
The proof of Proposition~\ref{Bimodule of sheaf} cannot, in general, be extended to Deligne--Mumford stacks via descent. For instance, let
$
p \colon Y \longrightarrow [Y/G] = X,
$
where $Y$ is a scheme and $G$ is a finite group. Over $Y$ we obtain an isomorphism
\[
\varphi \colon
p^{*}\mathcal{M} \xrightarrow{\;\sim\;}
p^{*}\mathcal{H}om_{\mathcal{A}^{e}}
\!\bigl(\mathcal{A},\, \mathcal{A} \otimes_{\mathcal{O}_{X}} \mathcal{M}\bigr).
\]
However, it is not clear whether \(\varphi\) is \(G\)-equivariant. Consequently, we cannot conclude that
$
\mathcal{M} \;\cong\;
\mathcal{H}om_{\mathcal{A}^{e}}\!\bigl(\mathcal{A},\, \mathcal{A} \otimes_{\mathcal{O}_{X}} \mathcal{M}\bigr)
$
over $X$.

\end{remark}

    \begin{proposition}\label{trivial line bundle}
    We have the isomorphism $\mathcal{A} \cong \mathcal{A} \otimes_{\mathcal{O}_{X}} \mathcal{M}$ as $\mathcal{A}$-bimodules if and only if $\mathcal{M}$ is isomorphic to $\mathcal{O}_X$ i.e. $\mathcal{M} \cong \mathcal{O}_X$.
\end{proposition}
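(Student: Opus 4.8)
The plan is to treat the two implications separately. The forward implication is immediate, while the reverse implication is the substantive part and rests on the faithful flatness of the Azumaya algebra together with the structural result already obtained in Proposition~\ref{Bimodule of sheaf}.

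For the ``if'' direction, suppose $\mathcal{M} \cong \mathcal{O}_X$. Then the canonical chain
$\mathcal{A} \otimes_{\mathcal{O}_X} \mathcal{M} \cong \mathcal{A} \otimes_{\mathcal{O}_X} \mathcal{O}_X \cong \mathcal{A}$
is visibly compatible with both the left and right $\mathcal{A}$-actions coming from the standard bimodule structure on $\mathcal{A}$, so it is an $\mathcal{A}$-bimodule isomorphism, as required. For the ``only if'' direction, suppose we are given an $\mathcal{A}$-bimodule isomorphism $\psi \colon \mathcal{A} \xrightarrow{\sim} \mathcal{A} \otimes_{\mathcal{O}_X} \mathcal{M}$, so in particular $\psi \in \operatorname{Hom}_{\mathcal{A}^{e}}(\mathcal{A}, \mathcal{A} \otimes_{\mathcal{O}_X} \mathcal{M})$. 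By Proposition~\ref{Bimodule of sheaf} and Remark~\ref{induced by line bundle}, every such bimodule homomorphism has the form $\psi = \operatorname{id}_{\mathcal{A}} \otimes \phi$ for a unique $\phi \in \operatorname{Hom}_{\mathcal{O}_X}(\mathcal{O}_X, \mathcal{M})$, corresponding under $\eta_X$ to the global section $m := \psi(1) \in \HH^{0}(X, \mathcal{M})$; here I identify $\mathcal{A}$ with $\mathcal{A} \otimes_{\mathcal{O}_X} \mathcal{O}_X$. Thus the given $\psi$ is obtained from the $\mathcal{O}_X$-linear map $\phi \colon \mathcal{O}_X \to \mathcal{M}$ by applying the functor $\mathcal{A} \otimes_{\mathcal{O}_X} -$.

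It then remains to descend the isomorphism property from $\operatorname{id}_{\mathcal{A}} \otimes \phi$ to $\phi$ itself. The decisive point is that an Azumaya algebra $\mathcal{A}$ is locally free of positive rank $n^{2}$ over $\mathcal{O}_X$, hence faithfully flat. Since $\mathcal{A}$ is flat, the kernel and cokernel of $\operatorname{id}_{\mathcal{A}} \otimes \phi$ are $\mathcal{A} \otimes_{\mathcal{O}_X} \ker \phi$ and $\mathcal{A} \otimes_{\mathcal{O}_X} \operatorname{coker} \phi$; because $\psi = \operatorname{id}_{\mathcal{A}} \otimes \phi$ is an isomorphism, both vanish, and faithful flatness (checked on stalks, where $\mathcal{A}_x$ is free of positive rank over $\mathcal{O}_{X,x}$) forces $\ker \phi = \operatorname{coker} \phi = 0$. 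Hence $\phi \colon \mathcal{O}_X \to \mathcal{M}$ is an isomorphism and $\mathcal{M} \cong \mathcal{O}_X$. I expect the only place demanding care to be this faithful-flatness step: one must confirm that the rank is everywhere positive, so that $\mathcal{A} \otimes_{\mathcal{O}_X} -$ reflects the vanishing of sheaves, and that the formation of $\ker$ and $\operatorname{coker}$ commutes with the flat functor $\mathcal{A} \otimes_{\mathcal{O}_X} -$.
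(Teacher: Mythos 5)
Your proposal is correct and follows essentially the same route as the paper: both directions are handled identically, with the ``only if'' direction reducing $\psi$ to $\operatorname{id}_{\mathcal{A}} \otimes \phi$ via Remark~\ref{induced by line bundle} and then using flatness of $\mathcal{A}$ (locally free of positive rank) to conclude that $\operatorname{Ker}(\phi)$ and $\operatorname{Coker}(\phi)$ vanish. The only cosmetic difference is that the paper packages the faithful-flatness step as tensoring the four-term exact sequence $0 \to \operatorname{Ker}(\phi) \to \mathcal{O}_X \to \mathcal{M} \to \operatorname{Coker}(\phi) \to 0$ with $\mathcal{A}$, which is exactly the computation you describe.
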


\begin{proof}
    One direction is clear. For the other direction, suppose $\mathcal{A} \cong \mathcal{A} \otimes_{\mathcal{O}_{X}} \mathcal{M}$ as $\mathcal{A}$-bimodules. Let $\psi: \mathcal{A} \xrightarrow{\sim} \mathcal{A} \otimes_{\mathcal{O}_{X}} \mathcal{M}$ denote the isomorphism. By Remark~\ref{induced by line bundle}, we have $\psi = \operatorname{id}_{\mathcal{A}} \otimes \phi$ for some $\phi \in \operatorname{Hom}_{\mathcal{O}_X}(\mathcal{O}_X, \mathcal{M})$. Note that we have the following exact sequence:
\[
0 \to \operatorname{Ker}(\phi) \to \mathcal{O}_X \xrightarrow{\phi} \mathcal{M} \to \operatorname{Coker}(\phi) \to 0.
\]
After tensoring with $\mathcal{A}$, we obtain the following exact sequence:
\[
0 \to \mathcal{A} \otimes_{\mathcal{O}_{X}} \operatorname{Ker}(\phi) \to \mathcal{A} \xrightarrow{\psi} \mathcal{A} \otimes_{\mathcal{O}_{X}} \mathcal{M} \to \mathcal{A} \otimes_{\mathcal{O}_{X}} \operatorname{Coker}(\phi) \to 0.
\]
Since $\psi$ is an isomorphism, we have $\mathcal{A} \otimes_{\mathcal{O}_{X}} \operatorname{Ker}(\phi) = \mathcal{A} \otimes_{\mathcal{O}_{X}} \operatorname{Coker}(\phi) = 0$.  
Note that $\mathcal{A}$ is locally free of finite rank as an $\mathcal{O}_X$-module, so we have $\operatorname{Ker}(\phi) = \operatorname{Coker}(\phi) = 0$.  
Thus, $\phi$ is an isomorphism, and hence $\mathcal{M} \cong \mathcal{O}_X$. 
\end{proof}
Now we could prove the main theorem in this paper.

\begin{theorem}\label{injective of module}

The functor
\[
- \otimes_{\mathcal{O}_{X}} \mathcal{M} : \operatorname{QCoh}(X, \mathcal{A}) \to \operatorname{QCoh}(X, \mathcal{A})
\]
is naturally isomorphic to the identity functor $\operatorname{id}$ if and only if $\mathcal{M} \cong \mathcal{O}_X$.

\end{theorem}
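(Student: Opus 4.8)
The plan is to obtain the theorem as a formal consequence of the two structural propositions already in hand, namely Proposition~\ref{Proposition of fix category} and Proposition~\ref{trivial line bundle}. The point is that all of the genuine content has been isolated into those statements, so the proof of the theorem itself is just a two-step chain of biconditionals; I do not anticipate needing any new input here.

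First I would dispose of the easy implication. If $\mathcal{M}\cong\mathcal{O}_X$, then the functor $-\otimes_{\mathcal{O}_X}\mathcal{M}$ is isomorphic to $-\otimes_{\mathcal{O}_X}\mathcal{O}_X$, and tensoring with the structure sheaf is canonically the identity on $\operatorname{QCoh}(X,\cA)$; hence the functor is naturally isomorphic to $\operatorname{id}$. For the converse, suppose $-\otimes_{\mathcal{O}_X}\mathcal{M}\cong\operatorname{id}$ as endofunctors of $\operatorname{QCoh}(X,\cA)$. By Proposition~\ref{Proposition of fix category}, this natural isomorphism is equivalent to the existence of an $\cA$-bimodule isomorphism $\cA\cong\cA\otimes_{\mathcal{O}_X}\mathcal{M}$. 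Feeding this into Proposition~\ref{trivial line bundle}, which asserts precisely that such a bimodule isomorphism exists if and only if $\mathcal{M}\cong\mathcal{O}_X$, yields $\mathcal{M}\cong\mathcal{O}_X$, as desired. Concatenating the two equivalences closes the loop.

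Because the theorem reduces to stitching together prior results, there is no fresh obstacle inside this proof; the real difficulty was already resolved upstream. Specifically, the crux lay in Proposition~\ref{Bimodule of sheaf} together with Remark~\ref{induced by line bundle}, where one shows that every $\cA^{e}$-linear map $\cA\to\cA\otimes_{\mathcal{O}_X}\mathcal{M}$ has the form $\operatorname{id}_{\cA}\otimes\phi$ for a unique $\phi\in\Hom_{\mathcal{O}_X}(\mathcal{O}_X,\mathcal{M})$. This in turn rested on the centralizer computation $Z(A\otimes_R M)=M$ of Lemma~\ref{center of module}, proved by splitting the Azumaya algebra over a finite étale cover and invoking faithfully flat descent. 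If I were building the argument from scratch, that descent step---controlling the center after tensoring with the possibly non-flat module $M$---is where I would expect to spend the most effort; once it is in place, the present theorem is immediate, and Proposition~\ref{trivial line bundle} only adds the elementary remark that, since $\cA$ is locally free of finite rank over $\mathcal{O}_X$, the vanishing of $\cA\otimes_{\mathcal{O}_X}\operatorname{Ker}(\phi)$ and $\cA\otimes_{\mathcal{O}_X}\operatorname{Coker}(\phi)$ forces $\phi$ itself to be an isomorphism.

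A small consistency check I would run at the end is that the bimodule isomorphism produced by Proposition~\ref{Proposition of fix category} is the same kind of datum consumed by Proposition~\ref{trivial line bundle}, so that the two biconditionals compose without any mismatch in the category of $\cA$-bimodules; this is clear from the statements, but worth verifying explicitly to make the chain airtight. With that, the equivalence ``$-\otimes_{\mathcal{O}_X}\mathcal{M}\cong\operatorname{id}$'' $\iff$ ``$\mathcal{M}\cong\mathcal{O}_X$'' follows directly.
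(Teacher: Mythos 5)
Your proof is correct and follows exactly the paper's own argument: the theorem is deduced by combining Proposition~\ref{Proposition of fix category} with Proposition~\ref{trivial line bundle}. Your additional commentary on where the real difficulty lies (Lemma~\ref{center of module} and Proposition~\ref{Bimodule of sheaf}) accurately reflects the structure of the paper, but the proof itself is the same two-step chain of biconditionals.
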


\begin{proof}
The theorem follows from Propositions~\ref{Proposition of fix category} and~\ref{trivial line bundle}.
\end{proof}

Let $\operatorname{Coh}(X,\mathcal{A})$ denote the abelian category of coherent right $\mathcal{A}$-modules, and set 
$D^{b}(X,\mathcal{A}) := D^{b}(\operatorname{Coh}(X,\mathcal{A}))$ for its bounded derived category. 
Analogous to Theorem~\ref{injective of module}, we obtain the following theorem for $\operatorname{Coh}(X,\mathcal{A})$ when $X$ is a Noetherian scheme.

\begin{theorem}\label{injective of Coh}
Assume that $X$ is Noetherian and that $\mathcal{M} \in \operatorname{Coh}(X)$. Then the functor
\[
- \otimes_{\mathcal{O}_{X}} \mathcal{M} : \operatorname{Coh}(X, \mathcal{A}) \to \operatorname{Coh}(X, \mathcal{A})
\]
is naturally isomorphic to the identity functor $\operatorname{id}$ if and only if $\mathcal{M} \cong \mathcal{O}_X$.
Consequently, the map
\[
\Psi: \operatorname{Pic}(X) \to \operatorname{Aut}\!\big(D^{b}(X, \mathcal{A})\big), 
\qquad \mathcal{L} \mapsto \Psi(\mathcal{L}): \mathcal{F}^{\bullet} \mapsto \mathcal{F}^{\bullet} \otimes_{\mathcal{O}_{X}} \mathcal{L},
\]
is injective.
\end{theorem}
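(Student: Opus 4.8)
The plan is to deduce both assertions from the quasi-coherent statement already established in Theorem~\ref{injective of module}, using the Noetherian hypothesis to pass first between $\operatorname{Coh}(X,\mathcal{A})$ and $\operatorname{QCoh}(X,\mathcal{A})$, and then between $\operatorname{Coh}(X,\mathcal{A})$ and $D^{b}(X,\mathcal{A})$. The ``if'' direction is immediate: if $\mathcal{M}\cong\mathcal{O}_X$ then $-\otimes_{\mathcal{O}_X}\mathcal{M}\cong-\otimes_{\mathcal{O}_X}\mathcal{O}_X=\operatorname{id}$. So I would concentrate on the ``only if'' direction of the first claim and on the injectivity of $\Psi$.

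For the first assertion, suppose $\theta\colon-\otimes_{\mathcal{O}_X}\mathcal{M}\xrightarrow{\sim}\operatorname{id}$ is a natural isomorphism on $\operatorname{Coh}(X,\mathcal{A})$; I would promote it to a natural isomorphism on $\operatorname{QCoh}(X,\mathcal{A})$ and then invoke Theorem~\ref{injective of module}. Since $X$ is Noetherian and $\mathcal{A}$ is a coherent $\mathcal{O}_X$-algebra, every quasi-coherent right $\mathcal{A}$-module is the filtered colimit of its coherent right $\mathcal{A}$-submodules $\mathcal{F}_i\cdot\mathcal{A}$ (each coherent because $\mathcal{A}$ is coherent), so that $\operatorname{QCoh}(X,\mathcal{A})=\operatorname{Ind}(\operatorname{Coh}(X,\mathcal{A}))$. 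Both endofunctors $-\otimes_{\mathcal{O}_X}\mathcal{M}$ and $\operatorname{id}$ commute with filtered colimits (tensoring is a left adjoint, hence colimit-preserving), so each is the canonical Ind-extension of its restriction to $\operatorname{Coh}(X,\mathcal{A})$, and $\theta$ extends uniquely to a natural transformation $\bar\theta$ given on $\mathcal{G}=\operatorname{colim}_i\mathcal{F}_i$ by $\bar\theta_{\mathcal{G}}=\operatorname{colim}_i\theta_{\mathcal{F}_i}$. Because filtered colimits are exact, $\bar\theta_{\mathcal{G}}$ is an isomorphism, being a filtered colimit of isomorphisms. Thus $-\otimes_{\mathcal{O}_X}\mathcal{M}\cong\operatorname{id}$ on $\operatorname{QCoh}(X,\mathcal{A})$, and Theorem~\ref{injective of module} gives $\mathcal{M}\cong\mathcal{O}_X$.

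The step I expect to be the main obstacle is exactly this extension: carefully establishing $\operatorname{QCoh}(X,\mathcal{A})=\operatorname{Ind}(\operatorname{Coh}(X,\mathcal{A}))$ and that $-\otimes_{\mathcal{O}_X}\mathcal{M}$ preserves filtered colimits of right $\mathcal{A}$-modules, so that the universal property of the Ind-completion makes $\bar\theta$ well defined and natural. This is precisely where the Noetherian hypothesis and the coherence of $\mathcal{A}$ are indispensable; without them neither the Ind-presentation nor the coherence of the generated submodules is available. Everything else here is formal.

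For the injectivity of $\Psi$, I would first note that $\Psi$ is a group homomorphism, since $(-\otimes_{\mathcal{O}_X}\mathcal{L})\circ(-\otimes_{\mathcal{O}_X}\mathcal{L}')\cong-\otimes_{\mathcal{O}_X}(\mathcal{L}\otimes_{\mathcal{O}_X}\mathcal{L}')$ and invertibility of $\mathcal{L}$ makes $\Psi(\mathcal{L})$ an autoequivalence with inverse $\Psi(\mathcal{L}^{-1})$; injectivity is therefore equivalent to $\ker\Psi=0$. So suppose $\Psi(\mathcal{L})\cong\operatorname{id}$ as autoequivalences of $D^{b}(X,\mathcal{A})$. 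As $\mathcal{L}$ is locally free it is flat, so $-\otimes_{\mathcal{O}_X}\mathcal{L}$ is exact and preserves the standard heart $\operatorname{Coh}(X,\mathcal{A})\subseteq D^{b}(X,\mathcal{A})$, where it restricts to the abelian functor $-\otimes_{\mathcal{O}_X}\mathcal{L}$. Restricting the natural isomorphism $\Psi(\mathcal{L})\cong\operatorname{id}$ to objects of the heart, and using that the heart is a full subcategory of $D^{b}(X,\mathcal{A})$ stable under this functor, I obtain a natural isomorphism $-\otimes_{\mathcal{O}_X}\mathcal{L}\cong\operatorname{id}$ on $\operatorname{Coh}(X,\mathcal{A})$. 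Since $\mathcal{L}\in\operatorname{Coh}(X)$, the first assertion forces $\mathcal{L}\cong\mathcal{O}_X$, whence $\ker\Psi=0$ and $\Psi$ is injective.
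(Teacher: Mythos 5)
Your proposal is correct and follows essentially the same route as the paper: the paper also deduces the coherent statement from Theorem~\ref{injective of module} by extending the natural isomorphism to $\operatorname{QCoh}(X,\mathcal{A})$, citing Lieblich for $\operatorname{QCoh}(X,\mathcal{A}) = \operatorname{Ind}(\operatorname{Coh}(X,\mathcal{A}))$ and Kashiwara--Schapira for the extension of the natural isomorphism, exactly the two facts you verify by hand at the step you identify as the main obstacle. Your explicit heart-restriction argument for the injectivity of $\Psi$ fills in a step the paper leaves implicit, and is the intended one.
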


\begin{proof}
By \cite[Proposition~3.1.1.9]{lieblich2008twisted}, the ind-completion of $\operatorname{Coh}(X,\mathcal{A})$ is $\operatorname{QCoh}(X,\mathcal{A})$.
By \cite[Proposition~6.1.9]{kashiwara2006categories}, a natural isomorphism
\[
-\otimes_{\mathcal{O}_{X}} \mathcal{M} \;\cong\; \operatorname{id}
\quad\text{on }\operatorname{Coh}(X,\mathcal{A})
\]
extends to a natural isomorphism
\[
-\otimes_{\mathcal{O}_{X}} \mathcal{M} \;\cong\; \operatorname{id}
\quad\text{on }\operatorname{QCoh}(X,\mathcal{A}).
\]
The claim then follows from Theorem~\ref{injective of module}.
\end{proof}

Now assume that $X$ is a smooth projective variety over a field $k$. Let $\mathscr{X} \to X$ be the $\mathbb{G}_m$-gerbe associated with the Brauer class $\alpha$. Let $\mathscr{A}ut_{\mathscr{X}}$ be the fibered category which assigns to each $k$-scheme $S$ the groupoid of automorphisms $\mathscr{X} \to \mathscr{X}$ over $S$ that induce the identity on the stabilizer group schemes $\mathbb{G}_m$. Then $\mathscr{A}ut_{\mathscr{X}}$ acts on $D^{b}(X, \alpha)$. We prove that the action is faithful.

\begin{corollary}\label{action faithful}
Assume that $k$ is algebraically closed.   That is let $\sigma \in \mathscr{A}ut_{\mathscr{X}}(k)$  be an automorphism of the stack inducing the identity funtor on $D^{b}(X,\alpha)$, then $\sigma=\operatorname{id}.$
\end{corollary}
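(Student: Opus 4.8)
The plan is to reduce the statement about gerbe automorphisms to the already-established faithfulness of the $\Pic(X)$-action (Theorem~\ref{injective of Coh}), by understanding what an automorphism $\sigma \in \mathscr{A}ut_{\mathscr{X}}(k)$ does on $D^{b}(X,\alpha)$ and showing that any such $\sigma$ acting as the identity must be trivial. First I would recall the local structure of $\mathscr{X} \to X$: since $\mathscr{X}$ is a $\Gm$-gerbe, an $\alpha$-twisted sheaf is precisely a quasi-coherent sheaf on $\mathscr{X}$ on which the inertia $\Gm$ acts with weight $1$, and $\operatorname{QCoh}(X,\alpha)$ is the weight-$1$ component of $\operatorname{QCoh}(\mathscr{X})$. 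The key point is that an automorphism $\sigma$ of $\mathscr{X}$ over $X$ that induces the identity on the stabilizer $\Gm$ acts on this weight-$1$ category, and one wants to identify this action with tensoring by some invertible object.

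The main structural input I would use is that the group of such automorphisms of $\mathscr{X}$ fixing $\Gm$ pointwise is naturally identified with $H^{0}(X, \mathcal{O}_X^{\times})$ together with $\Pic(X)$, or more precisely that $\mathscr{A}ut_{\mathscr{X}}(k)$ maps to $\Pic(X)$ via its action on twisted sheaves. Concretely, I would argue that $\sigma^{*}$ on $\operatorname{QCoh}(X,\alpha) = \operatorname{QCoh}(X,\cA)$ is a $k$-linear autoequivalence that commutes with the $\OO_X$-linear structure, hence by the twisted analogue of the standard argument is of Fourier--Mukai type, and because $\sigma$ lies over the identity on $X$ and fixes the inertia, its kernel is supported on the diagonal. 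This forces $\sigma^{*}$ to be isomorphic to $- \otimes_{\OO_X} \mathcal{L}$ for some line bundle $\mathcal{L} \in \Pic(X)$; this is exactly where Remark~\ref{remark fourier} (the Canonaco--Stellari classification, valid since $X$ is smooth projective) gets invoked to pin down the kernel as $\Delta_{*}\mathcal{L}$.

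Granting that identification, the rest is immediate: if $\sigma$ induces the identity functor on $D^{b}(X,\alpha)$, then $- \otimes_{\OO_X} \mathcal{L} \cong \operatorname{id}$ on $D^{b}(X,\alpha)$, so by Theorem~\ref{injective of Coh} the map $\Psi$ is injective and hence $\mathcal{L} \cong \OO_X$. A line-bundle automorphism corresponding to the trivial line bundle is the trivial automorphism of the gerbe (using that $k$ is algebraically closed, so $H^{0}(X,\OO_X^{\times}) = k^{\times}$ acts trivially on the gerbe modulo scaling of the inertia, which is already normalized away by the condition that $\sigma$ fixes $\Gm$). Therefore $\sigma = \operatorname{id}$, as claimed.

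The hard part will be the middle step: rigorously identifying the induced action $\sigma^{*}$ on $\operatorname{QCoh}(X,\alpha)$ with tensoring by a genuine \emph{line bundle} on $X$, rather than merely some abstract $\OO_X$-linear autoequivalence. This requires translating the gerbe-automorphism data into the $\cA$-module picture of Lemma~\ref{Cada} and checking that an automorphism fixing the inertia acts through the Picard group; the smoothness and projectivity of $X$ are used precisely to apply the Fourier--Mukai classification of Remark~\ref{remark fourier} so that the autoequivalence is of the expected tensor form. Once the action is known to factor through $\Psi$, faithfulness is a direct consequence of Theorem~\ref{injective of Coh}.
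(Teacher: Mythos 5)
Your overall skeleton (reduce to Theorem~\ref{injective of Coh} by showing that the induced functor is tensoring with a line bundle) matches the paper, but the way you propose to produce that line bundle contains a genuine gap: it is circular. The hypothesis already says that the induced functor $\sigma^{*}$ is isomorphic to the identity, so any argument that extracts a line bundle $\mathcal{L}$ \emph{from the functor} (via a Fourier--Mukai kernel and Canonaco--Stellari) can only ever tell you that this functor-derived $\mathcal{L}$ is trivial --- a statement about the functor, which you assumed, not about $\sigma$. To pass from ``the induced functor is the identity'' to ``$\sigma=\operatorname{id}$'' you need injectivity of the assignment from gerbe automorphisms (fixing the inertia) to autoequivalences, and that is precisely the statement being proved. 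What breaks this circle in the paper is a gerbe-theoretic, not derived-categorical, input: by \cite[Proposition~6.5]{olsson2025twisted}, every $\sigma\in\mathscr{A}ut_{\mathscr{X}}(k)$ is \emph{intrinsically} classified by a line bundle $\mathcal{L}_{\sigma}\in\Pic(X)$ (automorphisms of a $\Gm$-gerbe over $X$ fixing the band are, up to $2$-isomorphism, a group isomorphic to $\Pic(X)$), and the induced action on $D^{b}(X,\alpha)$ is $-\otimes_{\OO_X}\mathcal{L}_{\sigma}$. With that in hand, the hypothesis gives $-\otimes_{\OO_X}\mathcal{L}_{\sigma}\cong\operatorname{id}$, Theorem~\ref{injective of Coh} gives $\mathcal{L}_{\sigma}\cong\OO_{X}$, and the classification then forces $\sigma=\operatorname{id}$. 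You mention this structural fact in passing, but then explicitly substitute the Fourier--Mukai route for it (``this is exactly where Remark~\ref{remark fourier} gets invoked''), and that substitution is exactly what fails.

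A secondary problem: Remark~\ref{remark fourier} (Canonaco--Stellari) is not a classification of autoequivalences whose kernel is supported on the diagonal; it says only that a twisted Fourier--Mukai functor \emph{isomorphic to the identity} has kernel $\Delta_{*}\OO_{X}$. It therefore cannot be used to ``pin down the kernel as $\Delta_{*}\mathcal{L}$'' for a general $\sigma$, and you also do not justify that $\sigma^{*}$ is of Fourier--Mukai type or that its kernel is diagonal-supported. But even granting all of those claims, the circularity above remains: the line bundle must be attached to $\sigma$ by the gerbe-theoretic classification (as in Olsson's result the paper cites), not read off from the induced functor.
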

\begin{proof}
By \cite[Proposition 6.5]{olsson2025twisted}, we know that $\sigma$ is given by tensoring with a line bundle $\mathcal{L}$ on $X$. By Theorem \ref{injective of Coh}, we know $\mathcal{L}\cong \mathcal{O}_{X}$. We complete the proof.
\end{proof}

Now we want to show that $D^{b}(X)$ is Calabi--Yau if and only if $D^{b}(X, \alpha)$ is Calabi--Yau. To do so, we first need the following lemma.

\begin{lemma}[{Serre duality}]\label{Serre duality}
Let $n=\operatorname{dim}(X)$. Then $\omega_{\mathcal{A}}[n]$ is the dualizing object for $D^{b}(X, \mathcal{A})$, where the $\mathcal{A}$-bimodule $\omega_{\mathcal{A}}$ is defined as $\omega_{\mathcal{A}} := \mathcal{A} \otimes_{\mathcal{O}_{X}} \omega_X$. Moreover, we have the following version of Serre duality:
\[
\operatorname{Hom}_{D^{b}(X, \mathcal{A})}(\mathcal{F}^\bullet, \mathcal{G}^\bullet)
\cong
\operatorname{Hom}_{D^{b}(X, \mathcal{A})}(\mathcal{G}^\bullet, \mathcal{F}^\bullet \otimes_{\cA} \omega_{\mathcal{A}}[n])^{*}
\cong 
\operatorname{Hom}_{D^{b}(X, \mathcal{A})}(\mathcal{G}^\bullet, \mathcal{F}^\bullet \otimes_{\mathcal{O}_{X}} \omega_{X}[n])^{*}
\]
for any $\mathcal{F}^\bullet, \mathcal{G}^\bullet \in D^{b}(X, \mathcal{A})$.
\end{lemma}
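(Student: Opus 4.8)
The plan is to show that $-\otimes_{\mathcal{O}_X}\omega_X[n]$ is a Serre functor on $D^{b}(X,\mathcal{A})$, from which both displayed isomorphisms will follow. The second isomorphism is essentially formal: since
\[
\mathcal{F}^{\bullet}\otimes_{\cA}\omega_{\mathcal{A}}
= \mathcal{F}^{\bullet}\otimes_{\cA}\bigl(\mathcal{A}\otimes_{\mathcal{O}_X}\omega_X\bigr)
\cong \mathcal{F}^{\bullet}\otimes_{\mathcal{O}_X}\omega_X
\]
naturally, and $\omega_X$ is a line bundle (so the functor is exact and needs no deriving), the two candidate Serre functors literally coincide on $D^{b}(X,\mathcal{A})$. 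It therefore remains to produce the first isomorphism. Before doing so I would record the homological input coming from the Azumaya hypothesis: because $X$ is smooth of dimension $n$ and $\mathcal{A}$ is locally free of finite rank over $\mathcal{O}_X$, the sheaf of algebras $\mathcal{A}$ has finite global dimension equal to $n$, so every object of $\operatorname{Coh}(X,\mathcal{A})$ has a finite resolution by $\mathcal{A}$-modules that are locally free over $\mathcal{O}_X$. Consequently $R\cHom_{\mathcal{A}}(\mathcal{F}^{\bullet},\mathcal{G}^{\bullet})$ is a perfect complex of $\mathcal{O}_X$-modules, and $\Hom_{D^{b}(X,\mathcal{A})}(\mathcal{F}^{\bullet},\mathcal{G}^{\bullet})=\HH^{0}\bigl(X,\,R\cHom_{\mathcal{A}}(\mathcal{F}^{\bullet},\mathcal{G}^{\bullet})\bigr)$.

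Next I would apply ordinary Grothendieck–Serre duality on the smooth projective variety $X$ (dualizing complex $\omega_X[n]$) to this perfect complex, obtaining
\[
\Hom_{D^{b}(X,\mathcal{A})}(\mathcal{F}^{\bullet},\mathcal{G}^{\bullet})^{*}
\cong
\Hom_{D^{b}(X)}\!\bigl(R\cHom_{\mathcal{A}}(\mathcal{F}^{\bullet},\mathcal{G}^{\bullet}),\,\omega_X[n]\bigr)
\cong
\HH^{0}\!\bigl(X,\,R\cHom_{\mathcal{O}_X}(R\cHom_{\mathcal{A}}(\mathcal{F}^{\bullet},\mathcal{G}^{\bullet}),\,\omega_X[n])\bigr).
\]
The entire problem is thereby reduced to a sheaf-level duality identification
\[
R\cHom_{\mathcal{O}_X}\!\bigl(R\cHom_{\mathcal{A}}(\mathcal{F}^{\bullet},\mathcal{G}^{\bullet}),\,\omega_X\bigr)
\;\cong\;
R\cHom_{\mathcal{A}}\!\bigl(\mathcal{G}^{\bullet},\,\mathcal{F}^{\bullet}\otimes_{\mathcal{O}_X}\omega_X\bigr),
\]
since taking $\HH^{0}$ of the $[n]$-shift of the right-hand side gives exactly $\Hom_{D^{b}(X,\mathcal{A})}(\mathcal{G}^{\bullet},\mathcal{F}^{\bullet}\otimes_{\mathcal{O}_X}\omega_X[n])$, and dualizing yields the asserted first isomorphism.

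To establish this sheaf-level identity I would exploit the self-duality of an Azumaya algebra: the reduced-trace pairing $\mathcal{A}\otimes_{\mathcal{O}_X}\mathcal{A}\to\mathcal{O}_X$, $(a,b)\mapsto \mathrm{trd}(ab)$, is nondegenerate and furnishes a canonical $\mathcal{A}$-bimodule isomorphism $\mathcal{A}\xrightarrow{\sim}\cHom_{\mathcal{O}_X}(\mathcal{A},\mathcal{O}_X)$. Because the desired identity is canonical and étale-local in nature, I would check it étale-locally, where $\mathcal{A}\cong\operatorname{Mat}_n(\mathcal{O}_X)$; there Morita equivalence reduces it to the standard tensor–hom biduality $R\cHom_{\mathcal{O}_X}(R\cHom_{\mathcal{O}_X}(\mathcal{P},\mathcal{Q}),\omega_X)\cong R\cHom_{\mathcal{O}_X}(\mathcal{Q},\mathcal{P}\otimes_{\mathcal{O}_X}\omega_X)$ for $\mathcal{O}_X$-perfect complexes, and the canonicity of the trace form lets the local isomorphisms glue by faithfully flat (étale) descent. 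The main obstacle I anticipate is precisely this last step: tracking the left/right $\mathcal{A}$-module structures through the nested $\cHom$'s and confirming that the Morita/trace-form identification is natural in both $\mathcal{F}^{\bullet}$ and $\mathcal{G}^{\bullet}$ and compatible with the bimodule structure of $\omega_{\mathcal{A}}$, so that the descent is legitimate and the resulting isomorphism is genuinely the Serre pairing rather than merely an abstract isomorphism of vector spaces. The reductions to the affine, split case are routine, but the bimodule bookkeeping is where care is required.
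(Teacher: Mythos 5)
Your proposal is essentially correct, but it takes a genuinely different route from the paper: the paper gives no argument at all, simply invoking Yekutieli's theory of dualizing complexes over sheaves of noncommutative algebras (\cite[Example~6.4]{yekutieli2006dualizing}), where the statement that $\cA\otimes_{\OO_X}\omega_X[n]$ dualizes $D^{b}(X,\cA)$ is an instance of the general machinery of (rigid) dualizing complexes. You instead reduce twisted Serre duality to classical Grothendieck--Serre duality on $X$ itself. Your intermediate claims are all sound: separability of the Azumaya algebra gives $\mathrm{gl.dim}\,\cA=\dim X$, so $R\cHom_{\cA}(\mathcal{F}^{\bullet},\mathcal{G}^{\bullet})$ is an $\OO_X$-perfect complex whose hypercohomology computes $\Hom_{D^{b}(X,\cA)}(\mathcal{F}^{\bullet},\mathcal{G}^{\bullet})$; applying ordinary duality against $\omega_X[n]$ and the reduced-trace self-duality $\cA\cong\cHom_{\OO_X}(\cA,\OO_X)$ (Azumaya algebras are symmetric) then yields the key sheaf-level identity $R\cHom_{\OO_X}\bigl(R\cHom_{\cA}(\mathcal{F}^{\bullet},\mathcal{G}^{\bullet}),\omega_X\bigr)\cong R\cHom_{\cA}\bigl(\mathcal{G}^{\bullet},\mathcal{F}^{\bullet}\otimes_{\OO_X}\omega_X\bigr)$, which you verify by \'etale-local Morita equivalence. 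One caution on the final step: descent glues \emph{maps}, not unrelated local isomorphisms, so you must first write down a single globally defined morphism --- e.g.\ the pairing given by composition followed by the reduced-trace map $R\cHom_{\cA}(\mathcal{F}^{\bullet},\mathcal{F}^{\bullet}\otimes_{\OO_X}\omega_X)\to\omega_X$ --- and only then check \'etale-locally that it is an isomorphism; your phrasing suggests you know this, but it is the one place where the sketch must be tightened into a proof. As for what each approach buys: the paper's citation is short and outsources all bimodule bookkeeping at the price of opacity, while your argument is self-contained, stays within classical duality on $X$, and makes transparent exactly why the dualizing object is $\cA\otimes_{\OO_X}\omega_X[n]$ with no noncommutative correction --- precisely because an Azumaya algebra is separable and symmetric over $\OO_X$ --- at the price of the left/right module bookkeeping you correctly identify as the crux.
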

\begin{proof}
See, for example, \cite[Example~6.4]{yekutieli2006dualizing}.
\end{proof}
\begin{theorem}\label{Calabi-Yau}
Let $n=\dim(X)$. Then
    $D^{b}(X)$ is a Calabi--Yau $[n]$-category if and if $D^{b}(X,\alpha)$ is a Calabi--Yau $[n]$-category.
\end{theorem}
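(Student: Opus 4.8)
The plan is to reduce both Calabi--Yau conditions to the single intrinsic condition $\omega_{X} \cong \mathcal{O}_{X}$, and then observe that this condition makes no reference to $\alpha$ whatsoever. Recall that a $k$-linear $\operatorname{Hom}$-finite triangulated category equipped with a Serre functor $S$ is a Calabi--Yau $[n]$-category precisely when $S$ is naturally isomorphic to the shift $[n]$. Since $X$ is smooth projective, both $D^{b}(X)$ and $D^{b}(X,\alpha)$ admit Serre functors, and a Serre functor is unique up to natural isomorphism; hence it suffices to identify $S$ explicitly in each case and determine exactly when it coincides with $[n]$, i.e.\ when its ``twisting part'' is isomorphic to the identity.

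For the untwisted category, classical Serre duality identifies the Serre functor of $D^{b}(X)$ as $-\otimes_{\mathcal{O}_{X}}\omega_{X}[n]$. Thus $D^{b}(X)$ is Calabi--Yau $[n]$ if and only if $-\otimes_{\mathcal{O}_{X}}\omega_{X}\cong\operatorname{id}$ as autoequivalences of $D^{b}(X)$. Evaluating such a natural isomorphism at $\mathcal{O}_{X}$ immediately gives $\omega_{X}\cong\mathcal{O}_{X}$; conversely $\omega_{X}\cong\mathcal{O}_{X}$ clearly yields the identity functor. (Equivalently, one may apply Theorem~\ref{injective of Coh} with $\alpha$ trivial, so that $\mathcal{A}=\mathcal{O}_{X}$.)

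For the twisted category, Lemma~\ref{Serre duality} identifies the Serre functor of $D^{b}(X,\alpha)$ with $-\otimes_{\mathcal{O}_{X}}\omega_{X}[n]$ as well, via the second displayed isomorphism of that lemma. Therefore $D^{b}(X,\alpha)$ is Calabi--Yau $[n]$ if and only if $-\otimes_{\mathcal{O}_{X}}\omega_{X}\cong\operatorname{id}$ on $D^{b}(X,\alpha)$. Here I invoke Theorem~\ref{injective of Coh} directly: the map $\Psi\colon\operatorname{Pic}(X)\to\operatorname{Aut}\!\big(D^{b}(X,\alpha)\big)$ is injective, and since $\omega_{X}$ is a line bundle with $\Psi(\mathcal{O}_{X})=\operatorname{id}$, the relation $\Psi(\omega_{X})\cong\operatorname{id}=\Psi(\mathcal{O}_{X})$ forces $\omega_{X}\cong\mathcal{O}_{X}$. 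Combining the two reductions, each side is equivalent to $\omega_{X}\cong\mathcal{O}_{X}$, which completes the argument.

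The only point requiring genuine care --- and the place where the substance of the paper enters --- is the passage from ``the Serre functor equals the shift'' to a statement about line bundles, since a priori a nontrivial $\omega_{X}$ might still act trivially on the twisted derived category. Ruling this out is exactly the faithfulness of the Picard-group action on $D^{b}(X,\alpha)$ established in Theorem~\ref{injective of Coh}; without it the twisted side could not be guaranteed to ``see'' the canonical bundle. Thus the Calabi--Yau equivalence is, in effect, a corollary of the faithfulness result together with twisted Serre duality, and I expect no further obstacle beyond carefully recording the uniqueness of Serre functors and matching the two Serre-functor computations.
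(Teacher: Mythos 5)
Your proof is correct and follows essentially the same route as the paper: both arguments combine the twisted Serre duality of Lemma~\ref{Serre duality} (identifying the Serre functor of $D^{b}(X,\alpha)$ as $-\otimes_{\mathcal{O}_{X}}\omega_{X}[n]$) with the faithfulness result of Theorem~\ref{injective of Coh} to reduce both Calabi--Yau conditions to $\omega_{X}\cong\mathcal{O}_{X}$. Your write-up is in fact slightly more careful than the paper's, since you make explicit the uniqueness of Serre functors needed to upgrade the Hom-space comparison to a natural isomorphism $-\otimes_{\mathcal{O}_{X}}\omega_{X}\cong\operatorname{id}$ before invoking the injectivity of $\Psi$.
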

\begin{proof}
    Recall that $\cA$ is the Azumaya algebra on $X$ such that $[\cA]=\alpha.$

$\Longrightarrow$: If $D^{b}(X)$ is a Calabi--Yau $[n]$-category, then $\omega_{X}\cong \mathcal{O}_{X}$. Then the result follows from Lemma \ref{Serre duality}.

$\Longleftarrow$: If $D^{b}(X,\alpha)=D^{b}(X,\cA)$ is a Calabi--Yau $[n]$-category. Then for any $\mathcal{F}^{\bullet}, \mathcal{G}^{\bullet}\in D^{b}(X,\cA)$  we have $$\operatorname{Hom}_{D^{b}(X, \mathcal{A})}(\mathcal{F}^\bullet, \mathcal{G}^\bullet)
\cong
\operatorname{Hom}_{D^{b}(X, \mathcal{A})}(\mathcal{G}^\bullet, \mathcal{F}^\bullet[n])^{*}.
$$
By Lemma \ref{Serre duality}, we have 
$$
\operatorname{Hom}_{D^{b}(X, \mathcal{A})}(\mathcal{G}^\bullet, \mathcal{F}^\bullet[n])\cong 
\operatorname{Hom}_{D^{b}(X, \mathcal{A})}(\mathcal{G}^\bullet, \mathcal{F}^\bullet \otimes_{\mathcal{O}_{X}} \omega_{X}[n]).
$$
So $\mathcal{F}^{\bullet}\cong \mathcal{F}^{\bullet}\otimes_{\mathcal{O}_{X}} \omega_{X}$ for all $\mathcal{F}^{\bullet}$. Thus by Theorem \ref{injective of Coh}, $\omega_{X}\cong \mathcal{O}_{X}$. Thus, $D^{b}(X)$ is a Calabi--Yau $[n]$-category. 
    
\end{proof}

In \cite[Example~6.6]{olsson2025twisted}, Olsson gave an example showing that Theorem~\ref{injective of Coh} does not hold for Deligne--Mumford stacks. We provide a new example demonstrating that Theorems \ref{injective of module}, \ref{injective of Coh}, and \ref{Calabi-Yau} may fail in the setting of Deligne--Mumford stacks.

\begin{example}\label{example} 
Let $X$ be an Enriques surface over $\mathbb{C}$. By \cite[Chapter~VIII]{barth2004compact}, we have
$$
\mathrm{H}^{1}_{\acute{e}t}(X, \mu_2) \cong \operatorname{Pic}(X)[2] \cong \mathbb{Z}/2\mathbb{Z},
$$
which classifies 2-torsion line bundles on $X$. The nontrivial class corresponds to the canonical bundle $\omega_X$, satisfying $\omega_X^{\otimes 2} \cong \mathcal{O}_X$. This  $\mu_{2}$-torsor determines an étale double cover
$$
p: Y \to X,
$$
where $Y$ is a K3 surface. By \cite{beauville2009brauer}, we know that $\Br(X) = \mathbb{Z}/2\mathbb{Z}$. Let
\[
\pi: \mathbf{B}\mu_{2,X} = [X/\mu_{2}] \to X
\]
be the trivial $\mu_{2}$-gerbe over $X$. By \cite[Lemma~3.0.1]{yu2025morita}, we have
\[
\Br(\mathbf{B}\mu_{2, X}) = \Br(X) \oplus \HH^{1}_{\acute{e}t}(X, \mu_2) = \mathbb{Z}/2\mathbb{Z} \oplus \mathbb{Z}/2\mathbb{Z}.
\]
Note that by \cite[Theorem~1.5]{ishii2015special}, we have
\[
D^{b}(\mathbf{B}\mu_{2, X}) = D^{b}(X) \oplus D^{b}(X),
\]
which is not a Calabi--Yau category.

Let $\alpha \in \HH_{\acute{e}t}^{1}(X, \mu_2) \subseteq \Br(\mathbf{B}\mu_{2, X})$ be the Brauer class of the $\mathbf{B}\mu_{2, X}$ corresponding to the double cover $p: Y \to X$. By \cite[Lemma~5.1.7]{yu2025morita}, we have
\[
D^{b}(\mathbf{B}\mu_{2, X}, \alpha) \cong D^{b}(Y),
\]
which is a Calabi--Yau $[2]$-category. This shows that Theorem~\ref{Calabi-Yau} does not hold for Deligne--Mumford stacks.

At the same time, since $D^{b}(\mathbf{B}\mu_{2, X}, \alpha)$ is a Calabi--Yau $[2]$-category, the same argument as in the proof of Theorem~\ref{Calabi-Yau} shows that
\[
\mathcal{F}^\bullet \otimes_{\mathcal{O}_{\mathbf{B}\mu_{2, X}}} \omega_{\mathbf{B}\mu_{2, X}} \cong \mathcal{F}^\bullet
\]
for all $\mathcal{F}^\bullet \in D^{b}(\mathbf{B}\mu_{2, X}, \alpha)$, where $\omega_{\mathbf{B}\mu_{2, X}}$ is the canonical bundle. Note that $\omega_{\mathbf{B}\mu_{2, X}} = p^* \omega_X$, which is not trivial. This shows that Theorems~\ref{injective of module} and \ref{injective of Coh}  does not hold for Deligne--Mumford stacks.

\end{example}

Note that in Example~\ref{example}, every (geometric) point of the Deligne--Mumford stack $\mathbf{B}\mu_{2,X}$ has stabilizer group $\mu_{2}$, which is nontrivial. 
Now assume that $X$ is a quasi-separated Deligne--Mumford stack with generically trivial stabilizer. 
Then there exists a dense open substack $V \hookrightarrow X$ such that $V$ is an algebraic space. 
By \cite[Theorem~4.5.1]{alper2024stacks}, there exists a dense open subspace $U \hookrightarrow V$ such that $U$ is a scheme.

\begin{proposition}\label{proposition de}
Let $X$ be a quasi-separated Deligne--Mumford stack with generically trivial stabilizer, and let $U \hookrightarrow X$ be a dense open substack which is a scheme. 
Assume $\mathcal{M} \in \operatorname{QCoh}(X)$ and $\alpha \in \Br(X)$. 
If the functor
\[
- \otimes_{\mathcal{O}_{X}} \mathcal{M} : \operatorname{QCoh}(X, \alpha) \to \operatorname{QCoh}(X, \alpha)
\]
is naturally isomorphic to the identity functor $\operatorname{id}$, then $\mathcal{M}|_{U} \cong \mathcal{O}_{U}$.
\end{proposition}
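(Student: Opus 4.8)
The plan is to restrict the given natural isomorphism to the open substack $U$ and then apply the scheme case already proved. Since $U \hookrightarrow X$ is a dense open substack which is a scheme, restriction of quasi-coherent sheaves along the open immersion $j \colon U \hookrightarrow X$ is exact and compatible with the twist: it sends $\operatorname{QCoh}(X,\alpha)$ to $\operatorname{QCoh}(U, \alpha|_U)$, where $\alpha|_U \in \Br(U)$ is the pullback Brauer class. First I would observe that restriction commutes with the tensor functor, i.e. $(\mathcal{F} \otimes_{\mathcal{O}_X} \mathcal{M})|_U \cong \mathcal{F}|_U \otimes_{\mathcal{O}_U} (\mathcal{M}|_U)$ naturally in $\mathcal{F}$, since $\otimes$ is computed locally.

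\textbf{Transporting the natural isomorphism.}
Given the hypothesis that $- \otimes_{\mathcal{O}_X} \mathcal{M} \cong \operatorname{id}$ on $\operatorname{QCoh}(X,\alpha)$, I want to produce a natural isomorphism $- \otimes_{\mathcal{O}_U} (\mathcal{M}|_U) \cong \operatorname{id}$ on $\operatorname{QCoh}(U, \alpha|_U)$. The subtlety is that the restriction functor $j^*$ need not be essentially surjective onto $\operatorname{QCoh}(U,\alpha|_U)$, so I cannot simply restrict the components of the natural transformation to every object of the target. The clean way is instead to use that $j^*$ admits the exact left-exact extension $j_*$, together with the natural isomorphism $j^* j_* \cong \operatorname{id}$ valid for an open immersion of (quasi-separated) Deligne--Mumford stacks on quasi-coherent sheaves. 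For any $\mathcal{G} \in \operatorname{QCoh}(U,\alpha|_U)$, apply the given natural isomorphism to $j_* \mathcal{G} \in \operatorname{QCoh}(X,\alpha)$ and then restrict along $j^*$; using $j^* j_* \cong \operatorname{id}$ and the compatibility of $j^*$ with $\otimes$ yields a natural isomorphism
\[
\mathcal{G} \otimes_{\mathcal{O}_U} (\mathcal{M}|_U) \;\cong\; j^*\!\bigl(j_*\mathcal{G} \otimes_{\mathcal{O}_X}\mathcal{M}\bigr) \;\cong\; j^* j_* \mathcal{G} \;\cong\; \mathcal{G},
\]
natural in $\mathcal{G}$. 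This exhibits $- \otimes_{\mathcal{O}_U}(\mathcal{M}|_U)$ as naturally isomorphic to the identity on $\operatorname{QCoh}(U,\alpha|_U)$.

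\textbf{Applying the scheme case.}
Since $U$ is a scheme, Theorem~\ref{injective of module} (equivalently, the combination of Propositions~\ref{Proposition of fix category} and~\ref{trivial line bundle} applied over $U$ with the Azumaya algebra $\mathcal{A}|_U$ representing $\alpha|_U$) applies directly: a functor $- \otimes_{\mathcal{O}_U}\mathcal{N} \cong \operatorname{id}$ on $\operatorname{QCoh}(U,\alpha|_U)$ forces $\mathcal{N} \cong \mathcal{O}_U$. Taking $\mathcal{N} = \mathcal{M}|_U$ gives $\mathcal{M}|_U \cong \mathcal{O}_U$, as desired.

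\textbf{Main obstacle.}
The step I expect to be most delicate is verifying that $j^* j_* \cong \operatorname{id}$ holds on quasi-coherent sheaves in the stacky setting and that $j_*$ genuinely preserves the $\alpha$-twist, since the Remarks in the excerpt warn precisely that $p^* p_* \cong \operatorname{id}$ fails for a general étale morphism $p$. The key point is that here $j$ is an \emph{open immersion}, not an arbitrary étale cover, so the counit $j^* j_* \Rightarrow \operatorname{id}$ is an isomorphism by the same local reasoning as for schemes; this is exactly why one restricts to an open scheme $U$ rather than attempting étale descent. I would take care to confirm that $j_*$ of an $\alpha|_U$-twisted sheaf is $\alpha$-twisted (pushforward along an open immersion preserves the cocycle data defining the twist) before invoking the scheme-level theorem.
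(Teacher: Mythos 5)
Your proposal is correct and follows essentially the same route as the paper's own proof: push an arbitrary twisted sheaf on $U$ forward along the open immersion, apply the hypothesized natural isomorphism on $X$, pull back using $j^{*}j_{*}\cong\operatorname{id}$ and compatibility of restriction with tensor, and then invoke Theorem~\ref{injective of module} on the scheme $U$. Your closing remark about why $j^{*}j_{*}\cong\operatorname{id}$ is legitimate for an open immersion (but not for a general \'etale morphism) matches exactly the caveat the paper itself raises in its remarks.
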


\begin{proof}
Let $i \colon U \hookrightarrow X$ be the open immersion. 
For any $\mathcal{F} \in \operatorname{QCoh}(U, i^{*}\alpha)$, we have $i_{*}\mathcal{F} \in \operatorname{QCoh}(X,\alpha)$ and, by assumption,
\[
i_{*}\mathcal{F} \;\cong\; i_{*}\mathcal{F} \otimes_{\mathcal{O}_{X}} \mathcal{M}.
\]
Since $i^{*}i_{*} \cong \operatorname{id}$ , we obtain
\[
\mathcal{F} \;\cong\; i^{*}\!\bigl(i_{*}\mathcal{F} \otimes_{\mathcal{O}_{X}} \mathcal{M}\bigr)
\;\cong\; \mathcal{F} \otimes_{\mathcal{O}_{U}} i^{*}\mathcal{M}.
\]
Let $f \colon \mathcal{F} \to \mathcal{G}$ be a morphism in $\operatorname{QCoh}(U, i^{*}\alpha)$. We obtain the commutative diagram
\[
\begin{tikzcd}
i_{*}\mathcal{F} \arrow[r,"i_{*}f"] \arrow[d,"\cong"'] &
i_{*}\mathcal{G} \arrow[d,"\cong"] \\
i_{*}\mathcal{F}\otimes_{\mathcal{O}_{X}}\mathcal{M} \arrow[r,"i_{*}f \,\otimes\, \operatorname{id}"] &
i_{*}\mathcal{G}\otimes_{\mathcal{O}_{X}}\mathcal{M}.
\end{tikzcd}
\]
Applying the functor $i^{*}$  yields the commutative diagram
\[
\begin{tikzcd}
\mathcal{F} \arrow[r,"f"] \arrow[d,"\cong"] &
\mathcal{G} \arrow[d,"\cong"] \\
\mathcal{F}\otimes_{\mathcal{O}_{U}} i^{*}\mathcal{M} \arrow[r,"f \,\otimes\, \operatorname{id}"] &
\mathcal{G}\otimes_{\mathcal{O}_{U}} i^{*}\mathcal{M}.
\end{tikzcd}
\]
Therefore, the functor
\[
- \otimes_{\mathcal{O}_{U}} i^{*}\mathcal{M} \colon \operatorname{QCoh}(U, i^{*}\alpha) \longrightarrow \operatorname{QCoh}(U, i^{*}\alpha)
\]
is naturally isomorphic to the identity. Since $U$ is a scheme, Theorem~\ref{injective of module} implies that $i^{*}\mathcal{M} \cong \mathcal{O}_{U}$.
\end{proof}
We do not know if Theorem \ref{injective of module} holds for algebraic stacks:


\begin{conjecture}\label{conjecture}
Let $X$ be a quasi-separated Deligne--Mumford stack with generically trivial stabilizer. 
Let $\mathcal{M}\in \operatorname{QCoh}(X)$ and $\alpha\in \Br(X)$.
Then the functor
\[
-\otimes_{\mathcal{O}_{X}}\mathcal{M} \colon \operatorname{QCoh}(X,\alpha)\longrightarrow \operatorname{QCoh}(X,\alpha)
\]
is naturally isomorphic to the identity functor $\operatorname{id}$ if and only if $\mathcal{M}\cong \mathcal{O}_{X}$.
\end{conjecture}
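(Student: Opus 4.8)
The plan is to reduce the conjecture to the one step that genuinely uses the geometry of $X$ and to attack that step using the generic triviality already extracted in Proposition~\ref{proposition de}. The ``if'' direction is immediate. For the converse, fix an Azumaya algebra $\cA$ with $[\cA]=\alpha$ and pass to $\operatorname{QCoh}(X,\cA)$ via the stacky analogue of Lemma~\ref{Cada}. I claim the purely algebraic part of the scheme argument survives on a Deligne--Mumford stack. Indeed, the central map
\[
\eta\colon \mathcal{M}\longrightarrow \mathcal{H}om_{\cA^{e}}(\cA,\cA\otimes_{\OO_{X}}\mathcal{M}),\qquad m\longmapsto(a\mapsto a\otimes m),
\]
of Lemma~\ref{bimodule homorphism} is given by a canonical formula, so it is a morphism of sheaves on $X$ itself, requiring no descent datum. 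Being an isomorphism is étale-local, and on an étale cover of $X$ by a scheme $\eta$ is an isomorphism by Lemma~\ref{bimodule homorphism}; hence $\eta$ is an isomorphism on $X$. The same reasoning upgrades Lemma~\ref{center of module} to the sheaf statement $\mathcal{Z}(\cA\otimes_{\OO_{X}}\mathcal{M})\cong\mathcal{M}$, and Proposition~\ref{trivial line bundle} (whose proof only uses local freeness of $\cA$ and a cokernel computation) then shows that a \emph{bimodule} isomorphism $\cA\cong\cA\otimes_{\OO_{X}}\mathcal{M}$ already forces $\mathcal{M}\cong\OO_{X}$. Thus the entire difficulty is concentrated in promoting the natural isomorphism $-\otimes_{\OO_{X}}\mathcal{M}\cong\operatorname{id}$ to such a bimodule isomorphism, i.e.\ in the ``only if'' direction of Proposition~\ref{Proposition of fix category}, which is exactly where the Remark warns that $p^{*}p_{*}\not\cong\operatorname{id}$ for étale covers.

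To carry out that step, let $\psi$ be the component at $\cA$ of the natural isomorphism, a right-$\cA$-linear isomorphism $\cA\xrightarrow{\sim}\cA\otimes_{\OO_{X}}\mathcal{M}$, and set $u:=\psi(1)$. Open immersions into a stack still satisfy $i^{*}i_{*}\cong\operatorname{id}$ (this is precisely what Proposition~\ref{proposition de} exploits), so the argument of Proposition~\ref{Proposition of fix category} applies verbatim to every open substack $i\colon U\hookrightarrow X$: testing naturality against $i_{*}L_{a}$ for sections $a\in\cA(U)$ yields $a\,u=u\,a$ for all sections of $\cA$ over all open substacks. On the dense open $U_{0}\subseteq X$ on which $X$ is a scheme, commuting with sections over Zariski opens is the same as centrality, so $u|_{U_{0}}\in\mathcal{Z}(\cA\otimes_{\OO_{X}}\mathcal{M})|_{U_{0}}=\mathcal{M}|_{U_{0}}$; this is the content of Proposition~\ref{proposition de}. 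It remains to propagate this across the nowhere-dense stacky locus $Z=X\setminus U_{0}$. I would consider the image $\bar u$ of $u$ in the quotient $\mathcal{Q}:=(\cA\otimes_{\OO_{X}}\mathcal{M})/\mathcal{Z}(\cA\otimes_{\OO_{X}}\mathcal{M})$, a global section vanishing on $U_{0}$. If $\mathcal{Q}$ has no associated points on $Z$ (for instance if $X$ is normal and $\mathcal{M}$ is torsion-free, or if $\mathcal{M}$ is reflexive and $Z$ has codimension $\geq 2$), then $\bar u=0$, so $u$ is a global section of $\mathcal{Z}\cong\mathcal{M}$, $\psi=\operatorname{id}_{\cA}\otimes\phi$ becomes a bimodule map by the stacky form of Remark~\ref{induced by line bundle}, and Proposition~\ref{trivial line bundle} finishes.

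The main obstacle is removing these auxiliary hypotheses, i.e.\ handling an arbitrary quasi-coherent $\mathcal{M}$ on a possibly non-normal stack. The étale-local model makes the difficulty transparent: near a point of $Z$ with stabilizer $G$ we may write $X=[\operatorname{Spec}B/G]$, and naturality of the given transformation in the equivariant category produces commutation of $u$ only with $G$-\emph{invariant} sections of $\cA$, since $L_{a}$ is $G$-equivariant only for $a\in\cA^{G}$. This is exactly the mechanism that makes Example~\ref{example} fail, where the stabilizer is nontrivial everywhere. The generically-trivial-stabilizer hypothesis says that $G$ acts freely on a dense $G$-stable open of $\operatorname{Spec}B$, and the crux is to leverage this to rule out contributions of $\mathcal{M}$ supported at stacky points that are invisible to invariant sections. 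I expect that making this final reduction unconditional---rather than under normality or torsion-freeness---is the genuinely new input the conjecture demands, and it is where I would concentrate the effort: namely, a direct $G$-equivariant analysis of the bimodule maps $\cA\to\cA\otimes_{\OO_{X}}\mathcal{M}$ for a module $\mathcal{M}$ that is already equivariantly trivial on the free locus.
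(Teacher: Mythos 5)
The statement you are addressing is Conjecture~\ref{conjecture}: the paper does \emph{not} prove it. The authors state explicitly that they do not know whether Theorem~\ref{injective of module} holds for stacks, and their only result in this direction is Proposition~\ref{proposition de}, which yields $\mathcal{M}|_{U}\cong\OO_{U}$ on the dense open scheme locus and nothing more. Your proposal, as you yourself say in its last paragraph, is also not a proof: everything up to ``$u:=\psi(1)$ is central over the scheme locus $U_{0}$'' is sound, but the step that would finish the argument --- propagating centrality of $u$ across the stacky locus $Z=X\setminus U_{0}$ for an \emph{arbitrary} quasi-coherent $\mathcal{M}$ --- is exactly the open problem, and you close it only under auxiliary hypotheses (Noetherianness, which your associated-points argument silently requires, together with normality/torsion-freeness or reflexivity) that are not part of the conjecture. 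Concretely, since $\cA/\OO_{X}$ is locally free, your quotient is $\mathcal{Q}\cong(\cA/\OO_{X})\otimes_{\OO_{X}}\mathcal{M}$, so your condition ``$\mathcal{Q}$ has no associated points on $Z$'' is the condition that $\mathcal{M}$ has no associated points on $Z$; the genuinely hard case, where $\mathcal{M}$ has torsion or embedded components supported on the stacky locus and naturality only gives commutation of $u$ with $G$-invariant sections of $\cA$, remains untouched. So the proposal has a genuine gap --- the same gap that makes the statement a conjecture rather than a theorem in the paper.

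That said, your partial analysis is correct where it makes claims, and in two places it sharpens what the paper records. First, your observation that the map $\eta$ of Lemma~\ref{bimodule homorphism} is given by a canonical formula, hence is a morphism of sheaves on the stack itself whose isomorphy can be checked on an \'etale atlas (using that $\cA$ is a finite projective $\cA^{e}$-module, so $\mathcal{H}om_{\cA^{e}}(\cA,-)$ commutes with base change), is right; it shows that Lemma~\ref{center of module}, Proposition~\ref{Bimodule of sheaf}, and Proposition~\ref{trivial line bundle} all hold verbatim on Deligne--Mumford stacks, refining the paper's cautionary remark about non-equivariant descent. This correctly isolates the ``only if'' direction of Proposition~\ref{Proposition of fix category} as the sole obstruction. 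Second, your conditional propagation argument gives cases of the conjecture (Noetherian $X$, no associated points of $\mathcal{M}$ on $Z$) that go beyond Proposition~\ref{proposition de}, and your diagnosis of the failure mechanism via invariant sections is consistent with Example~\ref{example}, where the stabilizer is nowhere trivial and the conclusion indeed fails. But as a proof of Conjecture~\ref{conjecture} the proposal cannot be accepted; it is a reasonable research program whose final step is missing, just as it is missing in the paper.
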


\section*{References}
\bibliographystyle{alpha}
\renewcommand{\section}[2]{} 
\bibliography{ref}

\end{document}